\numberwithin{equation}{section}
\begin{document}

\newtheorem{theorem}{Theorem}[section]
\newtheorem{definition}[theorem]{Definition}
\newtheorem{lemma}[theorem]{Lemma}
\newtheorem{remark}[theorem]{Remark}
\newtheorem{proposition}[theorem]{Proposition}
\newtheorem{assumption}{Assumption}
\newtheorem*{example}{Example}
\newtheorem{corollary}[theorem]{Corollary}
\newtheorem*{notation}{Notation}
\newcommand{\supp}{\mathrm{supp}}

\numberwithin{equation}{section}

\begin{frontmatter}

\title{{\bf Stochastic Perturbations in the Fractional Nonlinear Schr\"odinger Equation: Well-posedness and Blow-up
}}
\author{\normalsize{\bf Ao Zhang$^{a,}\footnote{aozhang1993@csu.edu.cn}$,
Yanjie Zhang$^{b,*}\footnote{zhangyj2022@zzu.edu.cn}$,
and Jinqiao Duan$^{c,}\footnote{duan@gbu.edu.cn}$} \\[10pt]
\footnotesize{${}^a$School of Mathematics and Statistics, HNP-LAMA, Central South University, Changsha 410083, China.} \\[5pt]
\footnotesize{${}^b$Henan Academy of Big Data, Zhengzhou University, Zhengzhou 450052,  China.}  \\[5pt]
\footnotesize{${}^c$Department of Mathematics and Department of Physics, Great Bay University, Dongguan, Guangdong 523000, China.}
}

\begin{abstract}
  This work investigates radial solutions for nonlinear fractional Schr\"odinger equations driven by multiplicative noise. Leveraging radial deterministic and stochastic Strichartz estimates, we establish local well-posedness in the energy-subcritical regime for the stochastic fractional nonlinear Schr\"odinger equation. Global existence is subsequently demonstrated through stochastic evolution of mass and energy. In focusing supercritical settings, we derive blow-up criteria via localized virial inequality, revealing how multiplicative noise measurably suppresses blow-up formation compared to deterministic dynamics.
\end{abstract}

\begin{keyword}
 Fractional nonlinear Schr\"{o}dinger equation; Well-posedness; Blow-up; Localized virial estimate
\end{keyword}

\end{frontmatter}

\bigskip
\section{Introduction}
We analyze the stochastic fractional nonlinear Schr\"odinger equation with multiplicative noise in the energy space $H^\alpha(\mathbb{R}^n)$:
\begin{equation}
\label{orie}
\left\{
\begin{aligned}
&i du-\left[(-\Delta)^{\alpha} u+\lambda |u|^{2\sigma}u\right] dt=u \circ dW(t), \quad x \in \mathbb{R}^n, \quad t \geq 0, \\
&u(0)=u_0,
\end{aligned}
\right.
\end{equation}
where $u: \mathbb{R}^{+} \times \mathbb{R}^n \rightarrow \mathbb{C}$ is a stochastic process, $\sigma \in(0, \infty)$, and $W(t)$ denotes a real-valued cylindrical Wiener process. The parameter $\lambda \in\{ \pm 1\}$ distinguishes defocusing $(\lambda=1)$ and focusing ($\lambda=-1$) nonlinearities. The fractional Laplacian $(-\Delta)^\alpha$ with regularity exponent $\alpha \in$ $\left(\frac{1}{2}, 1\right)$ generates dispersive dynamics through its Fourier symbol $|\xi|^{2 \alpha}$. The notation $\circ$ stands for the Stratonovitch integral. The evolution problem \eqref{orie} can be seen as a canonical model for a nonlocal dispersive partial differential equation with a random potential. For physical reasons, this equation must preserve the $L^2$-norm. It follows that the product arising in the right hand side of \eqref{orie} is necessarily a Stratonovitch product and that the noise is real-valued.

To formulate the stochastic equation with precision, we construct a complete filtered probability space $\left(\Omega, \mathcal{F},\left(\mathcal{F}_t\right)_{t \geq 0}, \mathbb{P}\right)$ satisfying the usual hypotheses, endowed with a family of independent $\left(\mathcal{F}_t\right)$-adapted Brownian motions $\left\{\beta_k\right\}_{k \in \mathbb{N}^{+}}$. The noise structure is parametrized by a complete orthonormal system $\left\{e_k\right\}_{k \in \mathbb{N}^{+}}$in $L^2\left(\mathbb{R}^n, \mathbb{R}\right)$ and a Hilbert-Schmidt operator $\Phi$ on $L^{2}\left(\mathbb{R}^{n}, \mathbb{R}\right)$. The operator $\Phi$ is associated with a real-valued kernel $k \in L^2\left(\mathbb{R}^n \times \mathbb{R}^n\right)$, which is radially symmetric with respect to varaible $x$. Specially, the operator $\Phi$ is defined via the integral representation:
\begin{equation}
  \Phi h(x)=\int_{\mathbb{R}^{n}} k(x, y) h(y) d y, \quad h \in L^{2}\left(\mathbb{R}^{n}, \mathbb{R}\right),
  \end{equation}
  where the kernel $k(x,y)$ satisfies the radial symmetry condition $k(x,y)=k(|x|,y)$ for the variable $x \in \mathbb{R}^n$.
Then the process
\begin{equation*}
W(t, x, \omega):=\sum_{k=1}^{\infty} \beta_{k}(t, \omega) \Phi e_{k}(x), \quad t \geq 0, \quad x \in \mathbb{R}^{n}, \quad \omega \in \Omega,
\end{equation*}
is a Wiener process on $L^{2}\left(\mathbb{R}^{n}, \mathbb{R}\right)$ with covariance operator $\Phi \Phi^{*}$, and  the equation \eqref{orie} admits equivalent It\^o representation:
\begin{equation}
i d u-\left[(-\Delta)^{\alpha} u+\lambda |u|^{2\sigma}u \right] d t=u d W-\frac{1}{2} i u F_{\Phi} d t,
\end{equation}
where the function $F_{\Phi}$ is given by
\begin{equation}
F_{\Phi}(x)=\sum_{k=1}^{\infty}\left(\Phi e_{k}(x)\right)^{2}, \quad x \in \mathbb{R}^{n}.
\end{equation}

While the problem \eqref{orie} shares structural similarities with the extensively analyzed classical stochastic nonlinear Schr\"odinger equation (corresponding to $\alpha=1$), the development of a complete well-posedness theory and characterization of blow-up phenomena for solutions in the fractional regime $\alpha\neq 1$ remain fundamental challenges in contemporary stochastic partial differential equations analysis. This paper establishes two principal contributions to the analysis of the stochastic fractional nonlinear Schr\"odinger equation. First, for radially symmetric initial data in $H^\alpha(\mathbb{R}^n)$, we identify sufficient regularity conditions on the Hilbert-Schmidt operator that guarantee global existence of solutions. In particular, the global existence holds in the energy subcritical case, i.e. $0 \leq \sigma<\frac{2\alpha}{n-2\alpha}$ in the defocusing case ($\lambda=1$) and $0 \leq \sigma<\frac{2\alpha}{n}$ in the focusing case ($\lambda=-1$). Second, in the focusing supercritical case, i.e. $\frac{2\alpha}{n}\leq \sigma \leq\frac{2\alpha}{n-2\alpha}$ and $\lambda =-1$, we characterize general blow-up criteria, demonstrating how stochastic perturbations alter the dynamics compared to deterministic settings. Notably, our analysis reveals that multiplicative noise induces a measurable suppression effect on blow-up formation, contrasting sharply with deterministic blow-up theorems \cite{BHL16}.

The deterministic fractional nonlinear  Schr\"odinger equation appears in various fields such as nonlinear optics \cite{Longhi15}, quantum physics \cite{BM23} and water propagation \cite{Pusateri14}. Inspired by the Feynman path approach to quantum mechanics, Laskin \cite{Laskin02} used the path integral over L\'evy-like quantum mechanical paths to obtain a fractional Schr\"odinger equation.  Replacing the regular Laplacian with a fractional Laplacian in the Schr\"odinger equation might model systems with non-local interactions or anomalous diffusion. The local well-posedness of deterministic fractional nonlinear  Schr\"odinger equations in Sobolev spaces for non-radial initial data was established by Hong and Sire \cite{HS15}. The analysis crucially relies on Strichartz estimates for the unitary propagator
$e^{-i t(-\Delta)^{\alpha}}$, which manifest different forms depending on spatial symmetries: general non-radial estimates (see e.g. \cite{COX11, Di18}), improved radial estimates (see e.g. \cite{GW14, Ke12}), and weighted space-time estimates (see e.g. \cite{FW11}). For non-radial data, these Strichartz estimates have a loss of derivatives. This complicates the study of local well-posedness and results in a weaker local theory compared to the standard deterministic nonlinear Schr\"odinger equation ($\alpha=1$). The derivative loss in Strichartz estimates can be eliminated by considering radially symmetric initial data. However, such strengthened estimates impose constraints on the parameter $\alpha$, specifically requiring $\alpha\in\left[\frac{n}{2n-1}, 1\right)$.

Recent advances have focused on stochastic perturbations of the classical nonlinear Schr\"odinger equation (corresponding to $\alpha=1$ in \eqref{orie}), where both additive and multiplicative noise (in It\^o or Stratonovich form) introduce random forcing into the system. A systematic investigation of well-posedness for such stochastically perturbed nonlinear Schr\"odinger equations has been undertaken by numerous authors, establishing foundational results for these models. Bouard and Debussche developed stochastic Strichartz estimates to analyze stochastic integrals in a suitable Strichartz space. This framework enabled them to establish almost sure local existence and uniqueness of solutions in both $L^2(\mathbb{R}^n)$ and $H^1(\mathbb{R}^n)$. Subsequently, by exploiting the mass conservation law of the deterministic nonlinear Schr\"odinger equation combined with It\^o's lemma, the authors extended these results to prove almost sure global existence in $L^2(\mathbb{R}^n)$ (see \cite{BD99}). Furthermore, global well-posedness for the Cauchy problem in $H^1(\mathbb{R}^n)$ follows from a prior $H^1(\mathbb{R}^n)$-norm bound derived through energy methods (see \cite{AD03}). Within this framework, foundational contributions by Barbu, R\"ockner, and Zhang \cite{BRZ16} and Hornung \cite{Ho20} rigorously addressed analogous stochastic systems. For critical regimes, recent advancements in \cite{OO20, Zh23} provided pivotal insights into solution behaviors near critical thresholds. Current literature on stochastic fractional nonlinear Schr\"odinger equations remains limited. Notably, Brze\'zniak, Hornung, and Mille established the existence of martingale solutions for such equations on compact Riemannian manifolds with initial data in $H^{\alpha}$ (see \cite[Corollary 3.5]{BHW19}).

Motivated by these developments, we analyze the impact of multiplicative noise on the fractional nonlinear Schr\"odinger equation. Central to our analysis is the derivation of stochastic generalizations for mass and energy. We establish stochastic Strichartz estimates for the fractional Schr\"odinger propagator
$e^{-i t(-\Delta)^{\alpha}}$, which, when combined with their deterministic counterparts, enable us to prove almost sure local well-posedness of \eqref{orie} in the Sobolev space $H^\alpha(\mathbb{R}^n)$ (Theorem \ref{RLT}). In addition, we extend these results to the global regime. A key technical achievement involves deriving a prior bound on the
$H^\alpha$-norm of solutions through these generalized energy principles, which ultimately establishes global existence for \eqref{orie} (Theorem \ref{thm1}).

The question of the effect of white noise on blow-up has been widely studied also in the context of Schr\"odinger equation. The blow-up phenomena in classical nonlinear Schr\"odinger equations were first mathematically studied by Bouard and Debussche \cite{AD05} in the focusing supercritical case. Using a stochastic generalization of the variance identity and a control argument, they proved that the spatially smooth noise can cause blow-up immediately with positive probability for any given smooth initial data. See also \cite{AD03} for the additive noise case. Notably, the rescaling approach introduced by Barbu, R\"ockner and Zhang \cite{BRZ17} extended beyond well-posedness analysis, proving instrumental in characterizing blow-up dynamics for classical stochastic nonlinear Schr\"odinger equations.

We investigate the blow-up criteria for the focusing stochastic fractional nonlinear Schr\"odinger equation (corresponding to $\lambda=-1$ in \eqref{orie}). Building on the methodology of Bouard and Debussche \cite{AD03, AD05}, we develop a stochastic generalization of the virial identity via It\^o’s formula. A central challenge arises from the failure of the classical variance-virial law in fractional settings. For the standard nonlinear Schr\"odinger equation ( corresponding to $\alpha=1$), the variance-virial law governs the evolution
\begin{equation}\label{VV}
\frac{1}{2} \frac{d}{d t}\left(\int_{\mathbb{R}^n}|x|^2|u(t)|^2 d x\right)=2 \operatorname{Im}\left(\int_{\mathbb{R}^n} \bar{u}(t) x \cdot \nabla u(t) d x\right)
\end{equation}
provided the initial data satisfies $\int_{\mathbb{R}^n}|x|^2\left|u_0\right|^2 d x<+\infty$ holds. However, this argument breaks down for $\alpha \neq 1$, since identity \eqref{VV} fails in this case, as one readily checks by dimensional analysis. Inspired by the idea of Boulenger, Himmelsbach and Lenzmann \cite{BHL16}, we introduce a localized virial law by replacing the unbounded function $x$ in \eqref{VV} with a suitable cutoff function $\varphi_R$ such that $\nabla \varphi_R(x) \equiv x$ for $|x| \leqslant R$ and $\nabla \varphi_R(x) \equiv 0$ for $|x| \gg R$. This regularization enables rigorous analysis of the time evolution for the localized virial quantity, thereby establishing blow-up criteria for the stochastic fractional nonlinear Schr\"odinger equation \eqref{orie} (Theorem \ref{thm2}).

This paper is organized as follows.  In Section \ref{sec2},  we introduce some notations and state our main results.
In Section \ref{sec3}, we prove the existence of a local solution of the equation \eqref{orie} by using deterministic and stochastic fractional Strichartz inequalities. Meanwhile, we investigate the global existence of the original equation \eqref{orie}. In Section \ref{sec4}, we establish a sufficient criterion for blow-up of solution.

\section{Notations and main results}\label{sec2}
We introduce some notations throughout this paper. For $a\geq 1$, $a^{\prime}$ is called the conjugate of $a$ if it satisfies $\frac{1}{a}+\frac{1}{a^{\prime}}=1$. The capital letter $C$ denotes a positive constant, whose value may change from one line to another. The notation $C(p)$ is used to emphasize that the constant only depends on the parameter $p$, while $C(\cdot\cdot\cdot)$ is used for the case that there are
more than one parameter. For $p\geq 2$, the notation $L^{p}$ denotes the Lebesgue space of complex-valued functions.  The inner product in $L^{2}\left(\mathbb{R}^{n}\right)$ is endowed with
\begin{equation*}
  \left\langle f, g \right\rangle={\bf{Re}} \int_{\mathbb{R}^{n}} f(x) \bar{g}(x) dx,
\end{equation*}
for $f, g \in L^{2}\left(\mathbb{R}^{n}\right) $.

 Given two separable Hilbert spaces $H$ and $\widetilde{H}$. The operator $\Phi$ is called the Hilbert-Schmidt operator if there is an orthonormal basis $(e_{k})_{k\in \mathbb{N}^+}$ in $H$ such that
\begin{equation*}
\begin{aligned}
&\|\Phi\|^2_{L_{HS}(H;\widetilde{H} )}=\operatorname{tr} \Phi^* \Phi=\sum_{k \in \mathbb{N}^+}\left|\Phi e_k\right|_{\widetilde{H}}^2< \infty. \\
\end{aligned}
\end{equation*}

In the following, we review the concept of fractional derivatives. The fractional Laplace $(-\Delta)^{\alpha}$ is defined as
\begin{equation*}
(-\Delta)^{\alpha}u(x)=C(n,\alpha)~ \mathrm{P.V.}\int_{\mathbb{R}^n}\frac{u(x)-u(y)}{|x-y|^{n+2\alpha}}dy,\\
\end{equation*}
where $\mathrm{P.V.}$ denotes the principal value of the integral, and $C(n,\alpha)$ is a positive constant given by
\begin{equation}\label{constC}
\begin{aligned}
C(n,\alpha)=\frac{2^{2\alpha}\alpha\Gamma(\frac{n+2\alpha}{2})}{\pi^{\frac{n}{2}}\Gamma(1-\alpha)}.
\end{aligned}
\end{equation}
For every $\alpha\in(0,1)$, denote by
\begin{equation*}
\begin{aligned}
H^{\alpha}(\mathbb{R}^n)=\left\{u\in L^2(\mathbb{R}^n): \int_{\mathbb{R}^n}\int_{\mathbb{R}^n}\frac{|u(x)-u(y)|^2}{|x-y|^{n+2\alpha}}dxdy < \infty \right\}.
\end{aligned}
\end{equation*}
Then $H^{\alpha}(\mathbb{R}^n)$ is a Hilbert space with inner product given by
\begin{equation*}
\langle u,v \rangle_{H^{\alpha}(\mathbb{R}^n)}=\langle u,v \rangle_{L^{2}(\mathbb{R}^n)}+\mathbf{Re} \int_{\mathbb{R}^n}\int_{\mathbb{R}^n}\frac{\left(u(x)-u(y)\right)\left(\bar{v}(x)-\bar{v}(y)\right)}{|x-y|^{n+2\alpha}}dxdy,\quad u,v\in H^{\alpha}(\mathbb{R}^n).
\end{equation*}
In the following, we will also give the notation
\begin{equation*}
  \langle u,v \rangle_{\dot{H}^{\alpha}(\mathbb{R}^n)}=\mathbf{Re}\int_{\mathbb{R}^n}\int_{\mathbb{R}^n}\frac{\left(u(x)-u(y)\right)\left(\bar{v}(x)-\bar{v}(y)\right)}{|x-y|^{n+2\alpha}}dxdy,\quad u, v\in H^{\alpha}(\mathbb{R}^n),
\end{equation*}
and
\begin{equation*}
\|u\|^2_{\dot{H}^{\alpha}(\mathbb{R}^n)}=\int_{\mathbb{R}^n}\int_{\mathbb{R}^n}\frac{\left|u(x)-u(y)\right|^2}{|x-y|^{n+2\alpha}}dxdy,\quad u\in H^{\alpha}(\mathbb{R}^n).
\end{equation*}
Then we have
\begin{equation*}
  \langle u,v \rangle_{H^{\alpha}(\mathbb{R}^n)}=\langle u,v \rangle_{L^2(\mathbb{R}^n)}+\langle u,v \rangle_{\dot{H}^{\alpha}(\mathbb{R}^n)},\quad u, v\in H^{\alpha}(\mathbb{R}^n),
\end{equation*}
and
\begin{equation*}
\|u\|^2_{H^{\alpha}(\mathbb{R}^n)}=\|u\|^2_{L^2(\mathbb{R}^n)}+\|u\|^2_{\dot{H}^{\alpha}(\mathbb{R}^n)},\quad u\in H^{\alpha}(\mathbb{R}^n).
\end{equation*}
By \cite{EG12}, we know that
\begin{equation*}
\|(-\Delta)^{\frac{\alpha}{2}}u\|^{2}_{L^{2}(\mathbb{R}^n)}=\frac{1}{2}C(n,\alpha)\|u\|^{2}_{\dot{H}^{\alpha}(\mathbb{R}^n)},
\end{equation*}
and the norm $\|u\|_{H^{\alpha}\left(\mathbb{R}^n\right)}$ is equivalent to the norm $\left(\|u\|_{L^2\left(\mathbb{R}^n\right)}^2+\left\|(-\Delta)^{\frac{\alpha}{2}} u\right\|_{L^2\left(\mathbb{R}^n\right)}^2\right)^{\frac{1}{2}}$ for any $u \in H^{\alpha}\left(\mathbb{R}^n\right)$.

The fractional nonlinear Schr\"odinger equation inherits fundamental conservation laws from its classical counterpart, preserving both mass
\begin{equation}
M(u)=\int_{\mathbb{R}^n} |u|^{2}dx,
\end{equation}
and energy
\begin{equation}\label{Energy}
H(u)=\frac{1}{2}\int_{\mathbb{R}^n} \left|(-\Delta)^{\frac{\alpha}{2}}u\right|^2dx +\frac{\lambda}{2\sigma +2}\int_{\mathbb{R}^n} |u|^{2\sigma+2}dx,
\end{equation}
which underpin the analysis of solution dynamics across both deterministic and stochastic frameworks.

\begin{assumption}\label{assump1}(On the noise)
  We assume that $\Phi: L^2 \rightarrow H^{\alpha}$ is a Hilbert-Schmidt operator, i.e.
  \begin{equation*}
  \|\Phi\|_{L_{H S}(L^2,  H^{\alpha})}:=\left(\sum_{k=1}^{\infty}\left\|\Phi e_k\right\|_{ H^{\alpha}}^2\right)^{1 / 2}<\infty .
  \end{equation*}

  This implies that
  \begin{equation*}
  \|\Phi\|_{L_{H S}(L^2;  H^{\alpha})}^2=\sum_{k=1}^{\infty}\left\|\Phi e_k\right\|^2+\sum_{k=1}^{\infty}\left\|(-\Delta)^{\frac{\alpha}{2}} \Phi e_k\right\|^2<\infty.
  \end{equation*}
  \end{assumption}

  \begin{assumption}\label{assump2}(On the nonlinearity)
  \begin{enumerate}
   \item [\textcolor{black}{$\bullet$}] If $\lambda=-1$ (focusing), let $0 \leq \sigma<\frac{2\alpha}{n}$.
   \item [\textcolor{black}{$\bullet$}] If $\lambda=1$ (defocusing), let $\begin{cases}0 \leq \sigma<\frac{2\alpha}{n-2\alpha}, & \text { for } n \geq 2, \\ \sigma \geq 0, & \text { for } n < 2.\end{cases}$
   \end{enumerate}
  \end{assumption}

  Let us start with the global well-posedness in the energy-subcritical case.
  \begin{theorem}(Global well-posedness)
    \label{thm1}
    Let $n \geq 2$ and $\alpha \in\left[\frac{n}{2 n-1}, 1\right)$.  Suppose that Assumptions \ref{assump1} and \ref{assump2} hold, and
     \begin{equation}\label{Phicond}
    \sum_{k=1}^{\infty}\int_{\mathbb{R}^n}\frac{\left(\Phi e_k(x)-\Phi e_k(y)\right)^2}{|x-y|^{n+2\alpha}}dx \leq \mathcal{R}
     \end{equation}
     with a positive constant $\mathcal{R}$. Then if radial initial data $u_0\in H^{\alpha}(\mathbb{R}^n)$ almost surely, there exists a unique global solution of equation \eqref{orie} in $H^{\alpha}(\mathbb{R}^n)$.
    \end{theorem}
    \begin{remark}
      1) Due to the limitations of radial Strichartz estimates, we need to impose more constraints on the coefficient $\alpha$. As a result, compared to the classical stochastic nonlinear Schr\"odinger equation, the range of the nonlinearity exponent $\sigma$ becomes narrower.\\
      2) The condition \eqref{Phicond} is technical and reasonable, see the proof of Theorem \ref{thm1} for details.
      Here we introduce a smooth function $\rho(x)$ defined for $0 \leq x<\infty$ such that $0 \leq \rho(x) \leq 1$, for $x\geq 0$ and
      \begin{equation*}
      \label{pho}
      \rho(x)=\begin{cases}0, & \text { if }~ 0 \leq x\leq \frac{1}{2}, \\ 1, & \text { if }~ x \geq 1.\end{cases}
      \end{equation*}
      Taking $\Phi e_k(x)=\rho(\frac{|x|}{k})$ and $\alpha \in (\frac{1}{2}, 1)$, there exists a positive constant $L_1$ independent of $k$ and $y\in \mathbb{R}^n$  such that
      \begin{equation*}
      \sum_{k=1}^{\infty}\int_{\mathbb{R}^n} \frac{\left|\Phi e_k(x)-\Phi e_k(y)\right|^2}{|x-y|^{n+2\alpha}} d x \leq \sum_{k=1}^{\infty}\frac{L_1}{k^{2\alpha}}< \infty.
      \end{equation*}
      \end{remark}

Our second main result establishes a sufficient criterion for the blow-up of radial solutions in the focusing supercritical case.
\begin{theorem}[Blow-up criterion]\label{thm2}
Let $\lambda=-1$, $n\geq 2$, $\alpha \in \left[\frac{n}{2n-1}, 1\right), \frac{2\alpha}{n}\leq \sigma \leq\frac{2\alpha}{n-2\alpha}$ with $\sigma < 2\alpha$. Let $u_0$ and $\Phi$ be as in Theorem \ref{thm1}. Assume also that
for some $t>0$,
  \begin{equation}\label{Condition}
    \mathbb{E}[H(u_0)]+\frac{1}{4}tC(n,\alpha){\mathcal{R}}\mathbb{E}[M(u_0)]<0,
  \end{equation}
where $C(n,\alpha)$ is a constant defined by \eqref{constC}.
Then the solution $u$ of the equation \eqref{orie} blows up in finite time almost surely.
\end{theorem}
\begin{remark}
  From condition \eqref{Condition}, it can be seen that compared to the deterministic result \cite[Theorem 1]{BHL16}, the presence of random perturbations requires more negative initial energy to counteract the influence of the second term in the inequality \eqref{Condition}. This partially explains why spatially smooth noise can prevent blow-up phenomena.
\end{remark}

\section{Global existence}\label{sec3}
This section is devoted to investigating the global existence for the stochastic fractional nonlinear Schr\"odinger equation \eqref{orie} with radial initial data in the energy-subcritical regime. We say a pair $(p, q)$ satisfies the fractional admissible condition if
\begin{equation}
  p \in[2, \infty], \quad q \in[2, \infty), \quad(p, q) \neq\left(2, \frac{4 n-2}{2 n-3}\right), \quad \frac{2\alpha}{p}+\frac{n}{q}=\frac{n}{2}.
\end{equation}
The unitary propagator $S(t):=e^{-i t(-\Delta)^{\alpha}}$
 associated with the fractional Laplacian admits various Strichartz-type estimates. These include non-radial estimates, radial-restricted estimates, and weighted space estimates. For our current purposes, we shall specifically employ the radial Strichartz estimates (see, e.g., Ref. \cite{VDD18}).
\begin{lemma}
\label{0estimate}
(Radial Strichartz estimates) For $n \geq 2$ and $\frac{n}{2 n-1} \leq \alpha<1$, there exists a positive constant $C$ such that the following estimates hold:
  \begin{equation}\label{Strichartz}
  \begin{aligned}
  \left\|S(\cdot)\psi\right\|_{L^p\left(\mathbb{R}, L^q(\mathbb{R}^n)\right)} &\leq C\|\psi\|_{L^2(\mathbb{R}^n)},\\
  \left\|\int_{0}^{t} S(t-s) f(s) d s\right\|_{L^{p}\left(\mathbb{R}, L^{q}(\mathbb{R}^n)\right)} & \leq C \|f\|_{L^{\beta^{\prime}}\left(\mathbb{R}, L^{\gamma^{\prime}}(\mathbb{R}^n)\right)},
  \end{aligned}
  \end{equation}
  where $\psi$ and $f$ are radially symmetric, $(p, q)$ and $(\beta, \gamma)$ satisfy the fractional admissible condition, and $\frac{1}{\beta^{\prime}}+\frac{1}{\beta}=\frac{1}{\gamma^{\prime}}+\frac{1}{\gamma}=1$.
\end{lemma}

 We recall the following fractional chain rule \cite{CW91},  which is needed in the local well-posedness for \eqref{orie}.
\begin{lemma}
\label{lemm1}
Let $F \in C^1(\mathbb{C}, \mathbb{C})$ and $\alpha \in(0,1)$. Then for $1<q \leq q_2<\infty$ and $1<q_1 \leq \infty$ satisfying $\frac{1}{q}=\frac{1}{q_1}+\frac{1}{q_2}$, there exists a positive constant $C$ such that
\begin{equation}
\left\||\nabla|^{\alpha}F(u)\right\|_{L^q} \leq C\left\|F^{\prime}(u)\right\|_{L^{q_1}}\left\||\nabla|^{\alpha}u\right\|_{L^{q_2}} .
\end{equation}
\end{lemma}

Next, we establish the stochastic Strichartz estimates. Define the stochastic integral
\begin{equation*}
J u(t):=\int_{0}^{t} S(t-s)(u(s) d W(s)).
\end{equation*}
We need the following lemma, which is a straightforward generalization of Lemma 3.1 and Lemma 3.2 in \cite{BD99}.
\begin{lemma}\label{lemm2}
 Assume that a pair $(p, q)$ satisfies the fractional admissible condition, $q<\frac{2 n}{n-\alpha}$, $T_0>0$, and take $\rho \geq p$. Under Assumption \ref{assump1}, for any $u$ radially adapted process in $L^\rho\left(\Omega ; L^p\left(0, T_0 ; W^{\alpha, q}\right)\right) \cap L^\rho\left(\Omega ; L^{\infty}\left(0, T_0 ; H^{\alpha}\right)\right)$ if $Iu$ is defined for $t_0, t \in\left[0, T_0\right]$ by
\begin{equation*}
I u\left(t_0, t\right)=\int_0^{t_0} S(t-s)(u(s) d W(s)).
\end{equation*}
Then for any stopping time $\tau$ with $\tau<T_0$ almost surely,
\begin{equation*}
\mathbb{E}\left(\sup _{0 \leq t_0 \leq \tau}\left|I u\left(t_0, \cdot\right)\right|_{L^p\left(0, \tau ; W^{\alpha, q}\right)}^\rho\right) \leq C\left(\rho, T_0,\|\Phi\|_{L_{H S}(L^2,  H^{\alpha})}\right) T_0^\delta \mathbb{E}\left(|u|_{L^{\infty}\left(0, \tau ; H^{\alpha}\right)}^\rho\right)
\end{equation*}
with $\delta>0$.
Furthermore, $J u$ has a modification in $L^\rho\left(\Omega ; L^p\left(0, T_0 ; W^{\alpha, q}\right) \cap C\left(\left[0, T_0\right] ; H^{\alpha}\right)\right)$ and
\begin{equation*}
\mathbb{E}\left(|J u|_{L^p\left(0, \tau ; W^{\alpha, q}\right)}^\rho\right) \leq C\left(\rho, T_0,\|\Phi\|_{L_{H S}(L^2,  H^{\alpha})}\right) T_0^\delta \mathbb{E}\left(|u|_{L^{\infty}\left(0, \tau ; H^{\alpha}\right)}^\rho\right)
\end{equation*}
and
\begin{equation*}
\mathbb{E}\left(\sup _{0 \leq t \leq \tau}|J u(t)|_{H^{\alpha}}^\rho\right) \leq C\left(\rho, T_0,\|\Phi\|_{L_{H S}(L^2,  H^{\alpha})}\right) T_0^\delta \mathbb{E}\left(|u|_{L^p\left(0, \tau ; W^{\alpha, q}\right)}^\rho\right).
\end{equation*}
\end{lemma}

We now establish local well-posedness for the stochastic fractional nonlinear Schr\"odinger equation \eqref{orie} with radially symmetric initial data in the energy-subcritical regime.
\begin{theorem}(Radial local theory).
\label{RLT}
 Let $n \geq 2$ and $\alpha \in\left[\frac{n}{2 n-1}, 1\right)$ and $0<\sigma<\frac{2 \alpha}{n-2 \alpha}$. Let
\begin{equation*}
p=\frac{4 \alpha(\sigma+2)}{\sigma(n-2 \alpha)}, \quad q=\frac{n(\sigma+2)}{n+\sigma \alpha}. \end{equation*}
Then under Assumption \ref{assump1}, for any $u_0 \in H^{\alpha}$ radial, there exists a stopping time $\tau^*\left(u_0, \omega\right)$ and a unique solution to \eqref{orie} starting from $u_0$, which is almost surely in $C\left([0, \tau] ; H^{\alpha}\right) \cap L^p\left(0, \tau ; W^{\alpha, q}\right)$ for any $\tau<\tau^*\left(u_0\right)$. Moreover, we have almost surely,
\begin{equation*}
\tau^*\left(u_0, \omega\right)=+\infty \quad \text { or } \quad \limsup _{t\to\tau^*\left(u_0, \omega\right)}|u(t)|_{H^{\alpha}}=+\infty.
\end{equation*}
\end{theorem}

\begin{proof}
It is easy to check that $(p, q)$ satisfies the fractional admissible condition. We choose $(a, b)$ so that
\begin{equation*}
\frac{1}{p^{\prime}}=\frac{1}{p}+\frac{2\sigma}{a}, \quad \frac{1}{q^{\prime}}=\frac{1}{q}+\frac{2\sigma}{b} .
\end{equation*}
We see that
\begin{equation*}
\frac{2\sigma}{a}-\frac{2\sigma}{p}=1-\frac{\sigma(n-2 \alpha)(\sigma+1)}{2 \alpha(\sigma+2)}=: \theta>0, \quad q \leq b=\frac{n q}{n-\alpha q},\quad q<\frac{2 n}{n-\alpha}.
\end{equation*}
The latter fact gives the Sobolev embedding $W^{\alpha, q} \hookrightarrow L^b$. Define the space:
\begin{equation*}
X:=\left\{C\left(I, H^\alpha\right) \cap L^p\left(I, W^{\alpha, q}\right):\|u\|_{L^{\infty}\left(I, H^\alpha\right)}+\|u\|_{L^p\left(I, W^{\alpha, q}\right)} \leq M\right\},
\end{equation*}
where $I=[0, \zeta]$ and $M, \zeta>0$ to be chosen later. Then the proof is based on a fixed point argument. Let us briefly discuss the proof, and more details are similar to the Theorem 4.1 in \cite{AD03}. By Duhamel's formula, it suffices to prove that the functional
\begin{equation*}
\begin{aligned}
  \mathcal{T}(u)(t):= & S(t) u_0-i \lambda \int_0^t S(t-s)|u(s)|^{2 \sigma} u(s)ds \\
& +i \int_0^t S(t-s)(u(s) d W(s))-\frac{1}{2} \int_0^t S(t-s)\left(u(s) F_\Phi\right) d s
\end{aligned}
\end{equation*}
is a contraction in $L^{\rho}\left(\Omega ; X\right)$.

Let $u_1, u_2 \in L^{\rho}\left(\Omega ; X\right)$. Then using Strichartz estimates \ref{Strichartz}, we have almost surely
\begin{equation*}
\begin{aligned}
\left|\mathcal{T} u_1-\mathcal{T}u_2\right|_{X} \leq
&C\left\|\left(|u_1|^{2 \sigma} u_1-|u_2|^{2 \sigma} u_2\right)\right\|_{L^{p^{\prime}}\left(I, W^{\alpha, q^{\prime}}\right)}+\left\|\int_0^t S(t-s)\left(u_1(s)-u_2(s)\right) d W(s)\right\|_{X}\\
&\quad+C\left\|\left(u_1-u_2\right) F_\Phi\right\|_{L^{p^{\prime}}\left(I, W^{\alpha, q^{\prime}}\right)} \\
:= & H_1+H_2+H_3.
\end{aligned}
\end{equation*}
The fractional chain rule given in Lemma \ref{lemm1} and H\"older's inequality give
\begin{equation*}
\begin{aligned}
\left\||u|^{2\sigma} u\right\|_{L^{p^{\prime}}\left(I, W^{\alpha, q^{\prime}}\right)} & \leq C\|u\|_{L^a\left(I, L^b\right)}^{2\sigma}\|u\|_{L^p\left(I, W^{\alpha, q}\right)}
  \leq C|I|^\theta\|u\|_{L^p\left(I, L^b\right)}^{2\sigma}\|u\|_{L^p\left(I, W^{\alpha, q}\right)}
& \leq C|I|^\theta\|u\|_{L^p\left(I, W^{\alpha, q}\right)}^{{2\sigma}+1}.
\end{aligned}
\end{equation*}
Similarly,
\begin{equation*}
\begin{aligned}
H_1=\left\||u_1|^{2\sigma}u_1-|u_2|^{2\sigma}u_2\right\|_{L^{p^{\prime}}\left(I, W^{\alpha, q^{\prime}}\right)} & \leq C\left(\|u_1\|_{L^a\left(I, L^b\right)}^{2\sigma}+\|u_2\|_{L^a\left(I, L^b\right)}^{2\sigma}\right)\|u_1-u_2\|_{L^p\left(I, W^{\alpha, q}\right)} \\
& \leq C|I|^\theta\left(\|u_1\|_{L^p\left(I, W^{\alpha, q}\right)}^{2\sigma}+\|u_2\|_{L^p\left(I, W^{\alpha, q}\right)}^{2\sigma}\right)\|u_1-u_2\|_{L^p\left(I, W^{\alpha, q}\right)}.
\end{aligned}
\end{equation*}
The last term is easily estimated thanks to H\"older's inequality, i.e.,
\begin{equation*}
\begin{aligned}
H_3& \leq C \|I\|^{1-\frac{2}{p}}\left|u_1-u_2\right|_{L^p\left(I, W^{\alpha, q}\right)}\left|F_\Phi\right|_{L^{\frac{n p}{4\alpha}}}
 \leq C \|I\|^{1-\frac{2}{p}}\|\Phi\|_{L_{H S}(L^2,  H^{\alpha})}\left|u_1-u_2\right|_X.
\end{aligned}
\end{equation*}
Thus, using Lemma \ref{lemm2} easily shows that $\mathcal{T}$ is a contraction mapping in $L^{\rho}\left(\Omega, X\right)$ provided $\zeta$ is chosen sufficiently small, depending on $M$.
\end{proof}

We establish stochastic evolution equations governing the mass and energy dynamics for solutions to the Cauchy problem associated with the fractional equation with multiplicative noise. These results constitute a natural extension of the classical conservation laws governing the deterministic system.
\begin{proposition}\label{prop1}
Suppose that the hypotheses of Theorem \ref{RLT} still hold. For any stopping time $\tau<\tau^*(u_0)$, we have
\begin{equation}\label{mass}
M(u(\tau))=M(u_0),\quad a.s.,
\end{equation}
and
\begin{equation}\label{energy}
\begin{aligned}
H(u(\tau))&=H(u_0)+\mathbf{Im} \int^{\tau}_0 \int_{\mathbb{R}^n} (-\Delta)^{\alpha}\bar{u}udxdW
-\frac{1}{2}\mathbf{Re}\int^{\tau}_0 \int_{\mathbb{R}^n}\left[(-\Delta)^{\alpha}\bar{u}\right]uF_{\Phi}dxdt\\
&\quad+\frac{1}{2}\sum_{k=1}^{\infty} \int^{\tau}_0 \int_{\mathbb{R}^n} (-\Delta)^{\alpha}\left(\bar{u}\Phi e_k(x)\right){u}\Phi e_k(x)dxdt.
\end{aligned}
\end{equation}
\end{proposition}
\begin{proof}
The formal application of It\^o formula to the mass functional $M(u(\tau))$ and energy functional $H(u(\tau))$ yields the candidate identities \eqref{mass} and \eqref{energy}. Rigorous justification of these stochastic evolution equations is achieved through truncation procedures combined with the local well-posedness framework developed in \cite{AD03}.
\end{proof}
\begin{proof}[{\bf Proof of Theorem \ref{thm1}}]
  To establish the global existence of solutions, it suffices to demonstrate uniform boundedness of the norm $\|u\|_{H^{\alpha}}$. So we first obtain the uniform boundedness of the energy $H(u)$. The stochastic energy evolution \eqref{energy} of $u$ implies that for any $T_0>0$, any stopping time $\tau<\inf(T_0, \tau^*(u_0))$ and any time $t\leq \tau$,
\begin{equation*}
\begin{aligned}
\mathbb{E}\left[\sup_{t\leq \tau}H(u(t))\right]&\leq H(u_0)+\mathbb{E}\left[\sup_{t\leq \tau} \left|\int^{t}_{0} \int_{\mathbb{R}^n}(-\Delta)^{\alpha}\bar{u}udxdW\right|\right]\\
&\quad+\frac{1}{2}\mathbb{E}\left[\sup_{t\leq \tau} \left|\sum_{k=1}^{\infty} \int^{t}_0 \int_{\mathbb{R}^n} (-\Delta)^{\alpha}\left(\bar{u}\Phi e_k(x)\right){u}\Phi e_k(x)dxds-\mathbf{Re}\int^{t}_0 \int_{\mathbb{R}^n}\left[(-\Delta)^{\alpha}\bar{u}\right]uF_{\Phi}dxds\right|\right]\\
&=: H(u_0)+\mathcal{J}_1+\mathcal{J}_2.
\end{aligned}
\end{equation*}

For the term $\mathcal{J}_1$, one has
\begin{equation}
  \begin{aligned}
    &\mathbf{Im}\left[\int^{t}_{0} \int_{\mathbb{R}^n}(-\Delta)^{\alpha}\bar{u}udxdW\right]=\mathbf{Im}\left[\sum_{{k=1}}^{\infty} \int^{t}_{0} \int_{\mathbb{R}^n}(-\Delta)^{\frac{\alpha}{2}}\bar{u}\cdot(-\Delta)^{\frac{\alpha}{2}}(u\Phi e_k)dxd\beta_k\right]\\
  & =\frac{C(n, \alpha)}{2}\sum_{{k=1}}^{\infty}\mathbf{Im}\left[\int^{t}_{0}\int_{\mathbb{R}^n} \int_{\mathbb{R}^n} \frac{\left(\bar{u}(x)-\bar{u}(y)\right)\left(u(x)\Phi e_k(x)-u(y)\Phi e_k(y)\right)}{|x-y|^{n+2 \alpha}} dydxd\beta_k\right] \\
  &=\frac{C(n, \alpha)}{2}\sum_{{k=1}}^{\infty}\mathbf{Im}\left[\int^{t}_{0}\int_{\mathbb{R}^n} \int_{\mathbb{R}^n} \frac{\Phi e_k(x)\left|u(x)-u(y)\right|^2}{|x-y|^{n+2 \alpha}} dydxd\beta_k\right] \\
  &\quad+\frac{C(n, \alpha)}{2}\sum_{{k=1}}^{\infty}\mathbf{Im}\left[\int^{t}_{0}\int_{\mathbb{R}^n} \int_{\mathbb{R}^n} \frac{\left(\bar{u}(x)-\bar{u}(y)\right)\left(\Phi e_k(x)-\Phi e_k(y)\right)u(y)}{|x-y|^{n+2 \alpha}} dydxd\beta_k\right] \\
  &=\frac{C(n, \alpha)}{2}\sum_{{k=1}}^{\infty}\mathbf{Im}\left[\int^{t}_{0}\int_{\mathbb{R}^n} \int_{\mathbb{R}^n} \frac{\left(\bar{u}(x)-\bar{u}(y)\right)\left(\Phi e_k(x)-\Phi e_k(y)\right)u(y)}{|x-y|^{n+2 \alpha}} dydxd\beta_k\right].\\
  \end{aligned}
  \end{equation}
By using Burkholder-Davis-Gundy inequality, H\"older inequality and Young inequality, we have
\begin{equation}\label{J1}
  \begin{aligned}
  \mathcal{J}_1&=\mathbb{E}\left[\sup_{t\leq \tau} \left|\int^{t}_{0} \int_{\mathbb{R}^n}(-\Delta)^{\alpha}\bar{u}udxdW\right|\right]\\
  &\leq C\mathbb{E}\left[\left(\int^{\tau}_{0}\left(\sum_{{k=1}}^{\infty}\int_{\mathbb{R}^n} \int_{\mathbb{R}^n} \frac{\left(\bar{u}(x)-\bar{u}(y)\right)\left(\Phi e_k(x)-\Phi e_k(y)\right)u(y)}{|x-y|^{n+2 \alpha}} dxdy\right)^2\mathrm{d}t\right)^\frac{1}{2}\right] \\
  &\leq C\mathbb{E}\left[\left(\int^{\tau}_{0}\left(\sum_{{k=1}}^{\infty}\int_{\mathbb{R}^n} \left(\int_{\mathbb{R}^n} \frac{|u(y)|\left|u(x)-u(y)\right|\left|\Phi e_k(x)-\Phi e_k(y)\right|}{|x-y|^{n+2 \alpha}} dx\right) dy\right)^2dt\right)^\frac{1}{2}\right] \\
  &\leq C\mathbb{E}\left[\left(\int^{\tau}_{0}\left(\sum_{{k=1}}^{\infty}\|u\|_{L_x^2}^2\int_{\mathbb{R}^n} \left(\int_{\mathbb{R}^n} \frac{\left|\left(u(x)-u(y)\right)\left(\Phi e_k(x)-\Phi e_k(y)\right)\right|}{|x-y|^{n+2 \alpha}} dx\right)^2 dy\right)dt\right)^\frac{1}{2}\right] \\
  &\leq C(M(u_0))\mathbb{E}\left[\left(\int^{\tau}_{0}\left(\sum_{{k=1}}^{\infty}\int_{\mathbb{R}^n} \left(\int_{\mathbb{R}^n} \frac{\left|u(x)-u(y)\right|^2}{|x-y|^{n+2 \alpha}} dx \int_{\mathbb{R}^n} \frac{\left|\Phi e_k(x)-\Phi e_k(y)\right|^2}{|x-y|^{n+2 \alpha}} dx\right) dy\right)dt\right)^\frac{1}{2}\right]\\
  &\leq C(n,\alpha,T_0,M(u_0), \mathcal{R})\mathbb{E}\left[\left(\int^{\tau}_{0}\|(-\Delta)^\frac{\alpha}{2}u\|^2dt\right)^\frac{1}{2}\right]\\
  &\leq C(n,\alpha,T_0,M(u_0), \mathcal{R})+C(n,\alpha,T_0,M(u_0), \mathcal{R})\mathbb{E}\left[\left(\int^{\tau}_{0}\|(-\Delta)^\frac{\alpha}{2}u\|^4dt\right)^\frac{1}{2}\right]\\
  &\leq C(n,\alpha,T_0,M(u_0), \mathcal{R})+C(n,\alpha,T_0,M(u_0), \mathcal{R})\varepsilon\mathbb{E}\left[\sup_{t\leq \tau}\|(-\Delta)^\frac{\alpha}{2}u\|^2\right]\\
  &\quad+C(n,\alpha,T_0,M(u_0), \mathcal{R}, \varepsilon)\int^{\tau}_{0}\mathbb{E}\left(\sup_{r\leq s} \|(-\Delta)^\frac{\alpha}{2}u(r)\|^2\right)ds,
  \end{aligned}
  \end{equation}
where $\varepsilon$ is small enough.

For the term $\mathcal{J}_2$, we have
  \begin{equation}\label{energy1}
  \begin{aligned}
&\sum_{k=1}^{\infty} \int_{\mathbb{R}^n} (-\Delta)^{\alpha}\left(\bar{u}\Phi e_k(x)\right){u}\Phi e_k(x)dxdt-\mathbf{Re} \int_{\mathbb{R}^n}\left[(-\Delta)^{\alpha}\bar{u}\right]uF_{\Phi}dxdt\\
 &=\sum_{k=1}^{\infty}\mathbf{Re}\left\{\left((-\Delta)^{\frac{\alpha}{2}}\left(\bar{u}\Phi e_k\right), (-\Delta)^{\frac{\alpha}{2}}\left(\bar{u}\Phi e_k\right)\right)-\left((-\Delta)^{\frac{\alpha}{2}}\bar{u}, (-\Delta)^{\frac{\alpha}{2}}\left(\bar{u}(\Phi e_k)^2\right)\right)
 \right\}\\
 &=\frac{C(n, \alpha)}{2}\sum_{k=1}^{\infty}\int_{\mathbb{R}^n}\int_{\mathbb{R}^n}\frac{\left(\bar{u}(x)\Phi e_k(x)-\bar{u}(y)\Phi e_k(y)\right)\left(u(x)\Phi e_k(x)-u(y)\Phi e_k(y)\right)}{|x-y|^{n+2\alpha}}dxdy\\
&\quad-\frac{C(n, \alpha)}{2}\sum_{k=1}^{\infty}\mathbf{Re}\int_{\mathbb{R}^n}\int_{\mathbb{R}^n}\frac{\left(\bar{u}(x)-\bar{u}(y)\right)\left(u(x)(\Phi e_k)^2(x)-u(y)(\Phi e_k)^2(y)\right)}{|x-y|^{n+2\alpha}}dxdy\\
 &=\frac{C(n, \alpha)}{2}\sum_{k=1}^{\infty}\mathbf{Re}\int_{\mathbb{R}^n}\int_{\mathbb{R}^n}
 \frac{u(x)\bar{u}(y)\Phi e_k(x)\left(\Phi e_k(x)-\Phi e_k(y)\right)-\bar{u}(x)u(y)\Phi e_k(y)\left(\Phi e_k(x)-\Phi e_k(y)\right)}{|x-y|^{n+2\alpha}}dxdy\\
 &=\frac{C(n, \alpha)}{2}\sum_{k=1}^{\infty}\mathbf{Re}\int_{\mathbb{R}^n}\int_{\mathbb{R}^n}\frac{u(x)\bar{u}(y)\left(\Phi e_k(x)-\Phi e_k(y)\right)\left(\Phi e_k(x)-\Phi e_k(y)\right)}{|x-y|^{n+2\alpha}}dxdy\\
 &\leq\frac{C(n, \alpha)}{2}\sum_{k=1}^{\infty}\int_{\mathbb{R}^n}\int_{\mathbb{R}^n}\frac{|u(x)\left(\Phi e_k(x)-\Phi e_k(y)\right)|\cdot |u(y)\left(\Phi e_k(x)-\Phi e_k(y)\right)|}{|x-y|^{n+2\alpha}}dxdy\\
 &\leq\frac{C(n, \alpha)}{4}\sum_{k=1}^{\infty}\int_{\mathbb{R}^n}\int_{\mathbb{R}^n}\frac{|u(x)|^2\left(\Phi e_k(x)-\Phi e_k(y)\right)^2}{|x-y|^{n+2\alpha}}dxdy\\
 &\quad+\frac{C(n, \alpha)}{4}\sum_{k=0}^{\infty}\int_{\mathbb{R}^n}\int_{\mathbb{R}^n}\frac{|u(y)|^2\left(\Phi e_k(x)-\Phi e_k(y)\right)^2}{|x-y|^{n+2\alpha}}dxdy\\
 &=\frac{C(n, \alpha)}{4}\sum_{k=1}^{\infty}\left(\int_{\mathbb{R}^n}|u(x)|^2\int_{\mathbb{R}^n}\frac{\left(\Phi e_k(x)-\Phi e_k(y)\right)^2}{|x-y|^{n+2\alpha}}dy\right)dx\\
 &\quad+\frac{C(n, \alpha)}{4}
 \sum_{k=1}^{\infty}\left(\int_{\mathbb{R}^n}|u(y)|^2\int_{\mathbb{R}^n}\frac{\left(\Phi e_k(x)-\Phi e_k(y)\right)^2}{|x-y|^{n+2\alpha}}dx\right)dy\\
 &\leq \frac{C(n, \alpha)}{2}M(u_0)\mathcal{R}.
 \end{aligned}
 \end{equation}
 Thus we have
 \begin{equation}
 \label{J2}
 \mathcal{J}_2\leq C(n,\alpha,T_0,M(u_0),\mathcal{R}).
 \end{equation}
Thus combined the above inequalities \eqref{J1} and \eqref{J2}, we have
\begin{equation}
\label{1energy1}
  \begin{aligned}
    \mathbb{E}\left[\sup_{t\leq \tau}H(u(t))\right]&\leq H(u_0)+ C(n,\alpha,T_0,M(u_0), \mathcal{R})+C(n,\alpha,T_0,M(u_0), \mathcal{R})\varepsilon\mathbb{E}\left[\sup_{t\leq \tau}\|(-\Delta)^\frac{\alpha}{2}u\|^2\right]\\
  &\quad+C(n,\alpha,T_0,M(u_0), \mathcal{R}, \varepsilon)\int^{\tau}_{0}\mathbb{E}\left(\sup_{r\leq s} \|(-\Delta)^\frac{\alpha}{2}u(r)\|^2\right)ds.
  \end{aligned}
\end{equation}

We first consider the case where $\lambda=1$ and $\sigma<\frac{2\alpha}{n-2\alpha}$, then using the equality \eqref{Energy}, we obtain
\begin{equation}
\begin{aligned}
\mathbb{E}\sup_{t\leq \tau}\|u\|^2_{\dot{H}^{\alpha}}&\leq 2\mathbb{E}\sup_{t\leq \tau} H(u)\\
&\leq 2H(u_0)+ C(n,\alpha,T_0,M(u_0), \mathcal{R})+C(n,\alpha,T_0,M(u_0), \mathcal{R})\varepsilon\mathbb{E}\left[\sup_{t\leq \tau}\|(-\Delta)^\frac{\alpha}{2}u\|^2\right]\\
  &\quad+C(n,\alpha,T_0,M(u_0), \mathcal{R}, \varepsilon)\int^{\tau}_{0}\mathbb{E}\left(\sup_{r\leq s} \|(-\Delta)^\frac{\alpha}{2}u(r)\|^2\right)ds.
\end{aligned}
\end{equation}
Then, using the Gr\"onwall inequality and the conservation of mass, one can deduce
\begin{equation*}
\mathbb{E}\sup_{t\leq \tau}\|u\|^2_{H^{\alpha}}\leq C(n,\alpha,T_0,M(u_0), H(u_0),\mathcal{R}).
\end{equation*}

Treating the case where $\lambda=-1$ and $\sigma<\frac{2\alpha}{n}$, we can utilize Gagliardo--Nirenberg's inequality (see \cite{VDD19}) and Young's inequality to derive the following equation:
\begin{equation}
\label{in6}
\begin{aligned}
\|u\|^{2\sigma+2}_{L^{2\sigma+2}}&\leq C \|u\|^{\frac{n\sigma}{\alpha}}_{\dot{H}^{\alpha}}\cdot \|u\|^{2\sigma+2-\frac{n\sigma}{\alpha}}_{L^2}
\leq C \varepsilon \|u\|^{2}_{\dot{H}^{\alpha}}+ C_{\varepsilon}\|u\|^{2+\frac{4\sigma}{2\alpha-n\sigma}}_{L^2}.
\end{aligned}
\end{equation}
It's important to note that in the last inequality, it is crucial that $\sigma<\frac{2\alpha}{n}$. Then by the equality \eqref{Energy} and inequality \eqref{in6}, we know
\begin{equation*}
\begin{aligned}
\mathbb{E}\sup_{t\leq \tau}\|u\|^2_{\dot{H}^{\alpha}}&=2\mathbb{E}[\sup_{t\leq \tau} H(u)]+\frac{1}{\sigma+1}\mathbb{E}\left[\sup_{t\leq \tau}\|u\|^{2\sigma+2}_{L^{2\sigma+2}}\right]\\
&\leq 2H(u_0)+ C(n,\alpha,T_0,M(u_0), \mathcal{R})+C(n,\alpha,T_0,M(u_0), \mathcal{R})\varepsilon\mathbb{E}\left[\sup_{t\leq \tau}\|(-\Delta)^\frac{\alpha}{2}u\|^2\right]\\
  &\quad+C(n,\alpha,T_0,M(u_0), \mathcal{R}, \varepsilon)\int^{\tau}_{0}\mathbb{E}\left(\sup_{r\leq s} \|(-\Delta)^\frac{\alpha}{2}u(r)\|^2\right)ds.
\end{aligned}
\end{equation*}
By using Gr\"onwall inequality, we obtain
\begin{equation*}
\mathbb{E}\sup_{t\leq \tau}\|u\|^2_{\dot{H}^{\alpha}} \leq C( n,\alpha,T_0, M(u_0), H(u_0),\mathcal{R}).
\end{equation*}
These prior estimates, combined with local well-posedness, imply the global existence of a unique solution.
\end{proof}


\section{Blow-up criterion}\label{sec4}
This section establishes finite-time blow-up phenomena for the stochastic fractional nonlinear Schr\"odinger equation \eqref{orie} in the focusing supercritical case. We consider the following stochastic fractional nonlinear Schr\"odinger equation with focusing ($\lambda=-1$) power-type nonlinearity
\begin{equation}
  \label{orie1}
  \left\{
  \begin{aligned}
  &i du-\left[(-\Delta)^{\alpha} u- |u|^{2\sigma}u\right] dt=u \circ dW(t), \quad x \in \mathbb{R}^n, \quad t \geq 0, \\
  &u(0)=u_0,
  \end{aligned}
  \right.
  \end{equation}

  Let us assume that $\varphi: \mathbb{R}^{n} \rightarrow \mathbb{R}$ is a real-valued function with $\nabla \varphi \in W^{3, \infty}\left(\mathbb{R}^{n}\right)$. We define the localized virial of $u=u(t, x)$ to be the quantity given by
\begin{equation}
\mathcal{M}_{\varphi}[u(t)]:=2 \operatorname{Im} \int_{\mathbb{R}^{n}} \bar{u}(t) \nabla \varphi \cdot \nabla u(t) d x.
\end{equation}
Define the following self-adjoint differential operator
\begin{equation*}
\Gamma_{\varphi}:=-i\left(\nabla \cdot \nabla\varphi +\nabla\varphi\cdot \nabla\right),
\end{equation*}
which acts on functions according to
\begin{equation*}
\Gamma_{\varphi}f=-i\left(\nabla \cdot \left((\nabla\varphi) f\right) +(\nabla\varphi)\cdot (\nabla f)\right).
\end{equation*}
Then we readily check that
\begin{equation*}
  \mathcal{M}_{\varphi}[u(t)]=\left\langle u(t), \Gamma_{\varphi} u(t)\right\rangle.
  \end{equation*}
Therefore, we establish the time evolution of the localized virial $\mathcal{M}_{\varphi}[u(t)]$.
  \begin{lemma}
  Let $\alpha\in(1/2,1)$ and $\varphi: \mathbb{R}^n\rightarrow\mathbb{R}$ be such that $\nabla \varphi \in W^{3, \infty}\left(\mathbb{R}^{n}\right)$. Assume that $u\in C\left([0, \tau^*) ; H^{\alpha}\right)$ is a solution of \eqref{orie1}. Then for any $t \in[0, \tau^*)$, we have the identity
  \begin{equation}\label{LVI}
  \begin{aligned}
  d\mathcal{M}_{\varphi}[u(t)]&=\langle u, [(-\Delta)^{\alpha}, i \Gamma_{\varphi}]u \rangle dt+ \langle
  u, [-|u|^{2\sigma}, i\Gamma_{\varphi} ]u\rangle dt -2 \sum_{k=1}^{\infty} \int_{\mathbb{R}^n}|u|^2 \nabla \varphi \cdot \nabla(\Phi e_k)dx d\beta_k.
  \end{aligned}
  \end{equation}
  \end{lemma}
\begin{proof}
To simplify the presentation, we omit some procedures like mollifying the unbounded operator and taking the limit on the regularization parameter. Using It\^o's formula directly, we obtain
\begin{equation*}
\begin{aligned}
d\mathcal{M}_{\varphi}(u)&=\left\langle du, \Gamma_{\varphi} u \right\rangle
+\langle u, \Gamma_{\varphi} (du) \rangle+ \langle -iu dW, \Gamma_{\varphi}(-iudW)\rangle \\
&=\left\langle -i \left[\left((-\Delta)^{\alpha} u- |u|^{2\sigma}u\right)dt+udW-\frac{i}{2}uF_{\Phi}dt\right], \Gamma_{\varphi}u \right\rangle \\
&\quad+\left\langle u, -i\Gamma_{\varphi}\left[((-\Delta)^{\alpha}u-|u|^{2\sigma}u)dt+udW- \frac{i}{2}uF_{\Phi}dt\right]\right\rangle
+\langle udW, \Gamma_{\varphi}(udW)\rangle \\
\end{aligned}
\end{equation*}
\begin{equation*}
  \begin{aligned}
&=\left[\langle (-\Delta)^{\alpha}u, i\Gamma_{\varphi} u\rangle +\langle u, -i\Gamma_{\varphi}((-\Delta)^{\alpha} u)\rangle \right]dt \\
&+\left[\langle -|u|^{2\sigma}u, i\Gamma_{\varphi} u\rangle  + \langle u, -i\Gamma_{\varphi}(-|u|^{2\sigma}u)\rangle\right] dt \\
&+\left[\langle -iudW, \Gamma_{\varphi} u\rangle +\langle u, -i\Gamma_{\varphi}(udW) \rangle \right]\\
&+\left[\langle -\frac{1}{2}uF_{\Phi}, \Gamma_{\varphi}u \rangle  +\langle u, -\frac{1}{2}\Gamma_{\varphi}(uF_{\Phi})\rangle \right]dt\\
&+\langle udW, \Gamma_{\varphi}(udW) \rangle \\
&=:I_1+I_2+I_3+I_4+I_5.
\end{aligned}
\end{equation*}
For the terms $I_1$ and $I_2$, we have
\begin{equation*}
I_1+I_2=\left\langle u, [(-\Delta)^{\alpha}, i\Gamma_{\varphi}]u \rangle dt+\langle u, [-|u|^{2\sigma}, i\Gamma_{\varphi}]u\right \rangle dt,
\end{equation*}
where we recall that $[X, Y] \equiv X Y-Y X$ denotes the commutator of $X$ and $Y$.

For the terms $I_4$ and $I_5$, we have
\begin{equation*}
\begin{aligned}
I_4+I_5&=\langle -\frac{1}{2}uF_{\Phi}, \Gamma_{\varphi}u\rangle dt+\langle u, -\frac{1}{2}\Gamma_{\varphi}(uF_{\Phi}) \rangle dt + \sum_{k=1}^{\infty}\langle u\Phi e_k, \Gamma_{\varphi}(u\Phi e_k) \rangle dt \\
&=-\frac{1}{2}\int_{\mathbb{R}^n} uF_{\Phi}\left[\left(i\left( \nabla  \cdot (\nabla \varphi \bar{u})+\nabla \varphi \cdot \nabla \bar{u}\right)\right)\right]dx
-\frac{1}{2}\int_{\mathbb{R}^n} u\left[\left(i\left( \nabla  \cdot (\nabla \varphi \bar{u}F_{\Phi})+\nabla \varphi \cdot \nabla ( \bar{u}F_{\Phi}\right)\right)\right]dx\\
&+\sum_{k=1}^{\infty}\int_{\mathbb{R}^n}u\Phi e_k \left[i\left( \nabla  \cdot (\nabla \varphi \bar{u}\Phi e_k)+\nabla \varphi \cdot \nabla (\bar{u}\Phi e_k)\right)\right]dx\\
&=-\frac{i}{2}\sum_{k=1}^{\infty}\int_{\mathbb{R}^n} u(\Phi e_k)^2\left(\triangle \varphi \bar{u}+2 \nabla\varphi \cdot \nabla \bar{u} \right)dx\\
&-\frac{i}{2}\sum_{k=1}^{\infty}\int_{\mathbb{R}^n}u\left(\triangle \varphi \bar{u} (\Phi e_k)^2
+ 2 \nabla \varphi \cdot \nabla \bar{u}(\Phi e_k)^2+4\nabla \varphi \cdot \nabla(\Phi e_k)\bar{u}\Phi e_k\right)dx\\
&+i\sum_{k=1}^{\infty}\int_{\mathbb{R}^n} u(\Phi e_k)\left(\triangle \varphi \bar{u} \Phi e_k+ 2 \nabla \varphi \cdot \nabla(\Phi e_k ) \bar{u}+2 \nabla \varphi \cdot \nabla\bar{u}(\Phi e_k ) \right)dx\\
&=0.
\end{aligned}
\end{equation*}
For the term $I_3$, we have
\begin{equation*}
\begin{aligned}
I_3&=\langle -iudW, \Gamma_{\varphi}u \rangle +\langle u, -i \Gamma_{\varphi}(udW)\rangle \\
&=\sum_{k=1}^{\infty} \int_{\mathbb{R}^n} -iu \Phi e_k \left[i (\nabla \cdot (\nabla \varphi \bar{u}))+\nabla \varphi \cdot \nabla \bar{u}\right]dx d\beta_k\\
&+\sum_{k=1}^{\infty}\int_{\mathbb{R}^n} u\left[-(\nabla \cdot(\nabla \varphi \bar{u} \Phi e_k)+\nabla \varphi \cdot \nabla(\bar{u}\Phi e_k))\right]dx d\beta_k\\
&=\sum_{k=1}^{\infty}\int_{\mathbb{R}^n} \left(u\Phi e_k (\triangle \varphi \bar{u}+2\nabla \varphi \cdot \nabla \bar{u})-u (\triangle \varphi \bar{u}\Phi e_k +2 \nabla \varphi\cdot \nabla(\bar{u}\Phi e_k))\right)dx d\beta_k\\
&=-2\sum_{k=1}^{\infty}\int_{\mathbb{R}^n} |u|^2 \nabla \varphi \cdot \nabla(\Phi e_k)dxd\beta_k.
\end{aligned}
\end{equation*}
By summing the terms $I_1$ through $I_5$, we obtain the equality \eqref{LVI}.
\end{proof}

We have the following virial idetity.
\begin{lemma}[Localized Virial Identity]\label{LemLVI}
Let $\alpha\in(1/2,1)$ and $\varphi: \mathbb{R}^n\rightarrow\mathbb{R}$ be such that $\nabla \varphi \in W^{3, \infty}\left(\mathbb{R}^{n}\right)$. Assume that $u\in C\left([0, \tau^*) ; H^{\alpha}\right)$ is a solution of \eqref{orie1}. Then for any $t \in[0, \tau^*)$, we have the identity
\begin{equation}\label{VI}
\begin{aligned}
\frac{d}{dt}\mathbb{E}\left[\mathcal{M}_{\varphi}[u(t)]\right]&=\mathbb{E}\left[\int^{\infty}_0 m^{\alpha}\int_{\mathbb{R}^n}
\left\{4 \overline{\partial_k u_m}(\partial^2_{kl}\varphi)\partial_l u_m-(\triangle^2 \varphi)|u_m|^2\right\}dxdm\right]\\ &-\frac{2\sigma}{\sigma+1}\mathbb{E}\left[\int_{\mathbb{R}^n}(\Delta \varphi)|u|^{2\sigma+2}dx\right],
\end{aligned}
\end{equation}
where $u_m=u_{m}(t,x)$ is defined by
\begin{equation*}
u_m(t):=\frac{\sin \pi \alpha}{\pi}\frac{1}{-\triangle +m}u(t)=\frac{\sin \pi \alpha}{\pi}
\mathcal{F}^{-1}\left(\frac{\hat{u}(t, \xi)}{|\xi|^2+m}\right).
\end{equation*}
\end{lemma}
\begin{proof}
  We immediately observe that the stochastic integral $2 \sum_{k=1}^{\infty}\int_0^t \int_{\mathbb{R}^n}|u|^2 \nabla \varphi \cdot \nabla(\Phi e_k)dx d\beta_k$ is square integrable and its expectation vanishes. Consequently, applying the expectation operator to both sides of identity \eqref{LVI} eliminates the martingale terms, yielding the simplified deterministic relation. Then using the result in \cite[Lemma 2.1]{BHL16}, we can derive the equality \eqref{VI}.
\end{proof}

We now use the  formula for $\mathcal{M}_{\varphi}[u(t)]$ when $\varphi(x)$ is a suitable approximation of the unbounded function $a(x)=\frac{1}{2}|x|^2$ and hence $\nabla a(x)=x$. Let
$\varphi: \mathbb{R}^{n}\rightarrow \mathbb{R}$ be as above. We assume that $\varphi=\varphi(r)$ is radial and satisfies
\begin{equation*}
\varphi(r)=
\begin{cases}
r^2/2,  & r \leq 1, \\
constant,  & r \geq 10,
\end{cases}
\end{equation*}
and
\begin{equation*}
\varphi^{''}(r)\leq 1, ~~ r \geq 0.
\end{equation*}
For $R>0$ given, we define the rescaled function $\varphi_{R}: \mathbb{R}^n\rightarrow \mathbb{R}$ by setting
\begin{equation*}
\varphi_{R}(r):=R^2\varphi\left(\frac{r}{R}\right).
\end{equation*}
Then  we obtain the following  inequalities by a simple calculation
\begin{equation}
\label{inuq}
1-\varphi^{''}_{R}(r)\geq 0, ~~1-\frac{\varphi^{'}_{R}(r)}{r}\geq 0, ~~n-\triangle\varphi_{R}(r)\geq 0, ~~ r\geq 0,
\end{equation}

\begin{equation*}
\nabla \varphi_{R}(r)=R \varphi^{'}(\frac{r}{R})\frac{x}{|x|}=
\begin{cases}
x,  & r \leq R, \\
0,  & r \geq 10R,
\end{cases}
\end{equation*}
and
\begin{equation*}
\|\nabla^{j}\varphi_{R}\|_{L^{\infty}}\leq CR^{2-j},  ~~ 0\leq j \leq 4,
\end{equation*}

\begin{equation*}
\supp(\nabla^{j}\varphi_{R})\subset
\begin{cases}
\{|x|\leq 10R\},  & j=1,2, \\
\{R\leq |x|\leq 10R\},  & 3\leq j \leq 4.
\end{cases}
\end{equation*}

Here we give a priori estimate of $\mathbb{E}[H(u)]$.
\begin{proposition}
Suppose that the hypotheses of Theorem \ref{RLT} still hold. For any $t \in[0, \tau^*)$, we have
\begin{equation}
\label{0energy1}
\begin{aligned}
  \mathbb{E}[H(u)]&\leq \mathbb{E}[H(u_0)]+\frac{C(n,\alpha)}{4}t\mathbb{E}M(u_0)\mathcal{R},
\end{aligned}
\end{equation}
where $C(n,\alpha)$ is a constant defined by \eqref{constC}
\end{proposition}
\begin{proof}
  In the proof of Proposition \ref{prop1}, we observe that the stochastic integral in \eqref{energy} is square integrable and its expectation vanishes. For the stochastic evolution equation of energy $H(u)$, after taking expectation and using the result \eqref{energy1}, we obtain the inequality \eqref{0energy1}.
\end{proof}
For the time evolution of the localized virial $\mathcal{M}_{\varphi_{R}}[u(t)]$ with $\varphi_{R}$ as above, we have the following localized radial virial estimate.

\begin{lemma}
\label{technique}
Let $\lambda=-1$. Suppose that the hypotheses of Theorem \ref{RLT} still hold. Then for any $t \in[0, \tau^*)$, we have
\begin{equation}
  \begin{aligned}
  \frac{d}{dt}\mathbb{E}\left[\mathcal{M}_{\varphi_{R}}[u(t)]\right]&\leq 4\sigma n\mathbb{E}[H(u_0)]+\sigma nC({n,\alpha})t{\mathcal{R}}\mathbb{E}[M(u_0)]
  -2(\sigma n-2\alpha)\mathbb{E}\left[\|(-\Delta)^{\frac{\alpha}{2}}u(t)\|^2_{L^2}\right]\\
  &+C\left\{R^{-2\alpha}+CR^{-\sigma(n-1)+\varepsilon \alpha}\mathbb{E}\left[\|(-\Delta)^{\alpha/2}u(t)\|^{(\sigma/\alpha)+\varepsilon}_{L^2}\right]\right\}
  \end{aligned}
  \end{equation}
for any $0< \varepsilon < (2\alpha-1)\sigma/\alpha$. Here $C(n,\alpha)$ is a constant defined by \eqref{constC}, $C=C(\|u_0\|_{L^2}, n, \varepsilon, \alpha, \sigma)>0$ is
some constant that only depends on $\|u_0\|_{L^2}, n, \varepsilon, \alpha$ and $\sigma$.
\end{lemma}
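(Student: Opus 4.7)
My plan is to start from the exact identity in Lemma \ref{Lemma8}, specialize $\varphi$ to the radial cutoff $\varphi_R$, and decompose the right-hand side into a \emph{principal} piece (which reproduces the full-space virial identity for the unbounded function $a(x) = |x|^2/2$) and a \emph{remainder} supported on the annulus $\{R \le |x| \le 10R\}$. On $\{|x|\le R\}$ one has $\partial_{kl}^2\varphi_R = \delta_{kl}$, $\Delta\varphi_R = n$ and $\Delta^2\varphi_R = 0$, so the principal contribution reduces to $\int_0^\infty m^\alpha \int_{\mathbb{R}^n} 4|\nabla u_m|^2\,dx\,dm - \tfrac{2\sigma n}{\sigma+1}\int_{\mathbb{R}^n}|u|^{2\sigma+2}dx$. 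The Balakrishnan representation $u_m = \tfrac{\sin\pi\alpha}{\pi}(-\Delta+m)^{-1}u$ identifies the first integral with a constant multiple of $\|(-\Delta)^{\alpha/2}u\|_{L^2}^2$, producing the fractional Pohozaev-type expression $4\alpha\|(-\Delta)^{\alpha/2}u\|_{L^2}^2 - \tfrac{2\sigma n}{\sigma+1}\int|u|^{2\sigma+2}$.

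Next I would combine this with the energy relation $\int|u|^{2\sigma+2} = (\sigma+1)\|(-\Delta)^{\alpha/2}u\|_{L^2}^2 - (2\sigma+2)H[u]$ to rewrite the principal part as $-2(\sigma n - 2\alpha)\|(-\Delta)^{\alpha/2}u\|_{L^2}^2 + 4\sigma n\, H[u]$. Taking $\mathbb{E}$, the martingale term in Proposition \ref{energy} vanishes, and the remaining It\^o-Stratonovich correction regroups into a commutator-type expression which, via the bilinear singular-integral representation of $(-\Delta)^\alpha$, is dominated by $\int|u(t,x)|^2\sum_{k=0}^\infty\int \tfrac{(\Phi e_k(x)-\Phi e_k(y))^2}{|x-y|^{n+2\alpha}}\,dy\,dx$. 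Hypothesis \eqref{Phicond} together with the almost sure mass conservation $\|u(t)\|_{L^2}^2 = \|u_0\|_{L^2}^2$ supplied by Proposition \ref{prot} then gives $\mathbb{E}[H(u(t))] \le \mathbb{E}[H(u_0)] + \tfrac{1}{4}C_{n,\alpha}\,t\,\widetilde{\mathcal{R}}\,\mathbb{E}[M^2(u_0)]$, and multiplication by $4\sigma n$ supplies the first two terms of the claimed bound.

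The annular remainder splits into a Hessian/bi-Laplacian piece $4\overline{\partial_k u_m}(\partial_{kl}^2\varphi_R - \delta_{kl})\partial_l u_m - (\Delta^2\varphi_R)|u_m|^2$ and a nonlinear piece $-\tfrac{2\sigma}{\sigma+1}(\Delta\varphi_R - n)|u|^{2\sigma+2}$. Using the cutoff-derivative bounds $\|\nabla^j\varphi_R\|_{L^\infty}\le CR^{2-j}$ and the resolvent identity for $u_m$, integration against $m^\alpha\,dm$ on the annular support yields a bound of order $CR^{-2\alpha}\|u_0\|_{L^2}^2$ for the first piece once mass conservation is invoked. For the second piece, since $u$ is radial, the Strauss-type inequality $|u(r)| \le C r^{-(n-1)/2}\|u\|_{L^2}^{1/2}\|(-\Delta)^{\alpha/2}u\|_{L^2}^{1/2}$ combined with interpolation and mass conservation yields $CR^{-\sigma(n-1)+\varepsilon\alpha}\|(-\Delta)^{\alpha/2}u\|_{L^2}^{\sigma/\alpha+\varepsilon}$ for every $0 < \varepsilon < (2\alpha-1)\sigma/\alpha$; this is precisely where the hypothesis $\alpha \ge n/(2n-1) > 1/2$ is exploited, guaranteeing a strictly positive range of admissible $\varepsilon$.

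The hard part will be the localization of the nonlocal fractional Laplacian on the annulus: the principal Pohozaev expression is transparent once the Balakrishnan formula is invoked, but controlling the commutators between $\nabla\varphi_R$ and $(-\Delta+m)^{-1}$ uniformly against the weight $m^\alpha\,dm$ demands careful resolvent and heat-kernel bounds in the spirit of Boulenger-Himmelsbach-Lenzmann \cite{Boulenger16}, coupled with the annular cutoff derivative bounds. Once this localization is closed, assembling the principal part, the energy correction furnished by Proposition \ref{energy}, and the two annular remainders produces the inequality stated in the lemma.
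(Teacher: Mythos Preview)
Your overall strategy matches the paper's: start from Lemma~\ref{Lemma8}, extract the Pohozaev-type principal part, rewrite via the energy, bound $\mathbb{E}[H(u(t))]$ using Proposition~\ref{energy} together with the singular-integral representation of $(-\Delta)^\alpha$ and hypothesis~\eqref{Phicond}, and control the nonlinear annular tail by the fractional Strauss inequality plus interpolation. The assembly and the role of $\alpha>1/2$ are correctly identified.

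There is one point where your plan diverges from the paper and, as written, does not go through. You propose to lump the annular Hessian remainder $4\,\overline{\partial_k u_m}(\partial_{kl}^2\varphi_R-\delta_{kl})\partial_l u_m$ together with the bi-Laplacian piece and bound both by $CR^{-2\alpha}$ using the cutoff-derivative bounds $\|\nabla^j\varphi_R\|_{L^\infty}\le CR^{2-j}$ and commutator estimates between $\nabla\varphi_R$ and $(-\Delta+m)^{-1}$. But the coefficient $\partial_{kl}^2\varphi_R-\delta_{kl}$ is only $O(1)$ on the annulus (it does \emph{not} decay in $R$), so the derivative bounds give nothing here, and there is no obvious reason why $\int_0^\infty m^\alpha\int_{|x|\ge R}|\nabla u_m|^2\,dx\,dm$ should be small. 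The paper avoids this entirely by a sign argument: for radial $u$ the Hessian contraction collapses to $(\partial_r^2\varphi_R)|\partial_r u_m|^2$, and since $\partial_r^2\varphi_R\le 1$ one has
\[
\int_0^\infty m^\alpha\int_{\mathbb{R}^n}\overline{\partial_k u_m}(\partial_{kl}^2\varphi_R)\partial_l u_m\,dx\,dm
\;\le\;\int_0^\infty m^\alpha\int_{\mathbb{R}^n}|\partial_r u_m|^2\,dx\,dm
\;=\;\alpha\|(-\Delta)^{\alpha/2}u\|_{L^2}^2,
\]
so the annular Hessian remainder is simply dropped, not estimated. Only the bi-Laplacian piece $(\Delta^2\varphi_R)|u_m|^2$ needs the $R^{-2\alpha}$ bound, and that follows from \cite[Lemma~A.2]{Boulenger16} exactly as you indicate. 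Replace your commutator programme for the Hessian term by this monotonicity observation and the proof closes with no further difficulty.
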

\begin{proof}
The Hessian of a radial function $f$ can be written as
\begin{equation*}
\partial^2_{kl}f(|x|)=\left(\delta_{kl}-\frac{x_lx_k}{r^2}\right)\frac{\partial_r f}{r}+\frac{x_kx_l}{r^2}\partial^2_{r}f.
\end{equation*}
By Lemma \ref{LemLVI}, Plancherel's theorem, Fubini's theorem and inequality \eqref{inuq},  we have
\begin{equation}
\label{estimate1}
\begin{aligned}
&\int^{\infty}_{0} m^\alpha \int_{\mathbb{R}^{n}} \overline{\partial_k u_m}(\partial^2_{kl}\varphi_{R})\partial_{l}u_mdxdm
=\int^{\infty}_{0}m^\alpha\int_{\mathbb{R}^{n}} (\partial^2_{r}\varphi_{R})|\partial_{r}u_m|^2dx dm\\
&=\int_{\mathbb{R}^{n}}\left(\frac{\sin \pi \alpha}{\pi}\int^{\infty}_{0}\frac{m^{\alpha}}{(|\xi|^2+m)^2}\right)|\xi|^2 |\widehat{u(\xi)}|d\xi-
\int^{\infty}_{0}m^\alpha\int_{\mathbb{R}^{n}}\left(1-\partial^{2}_{r}\varphi_{R}\right)|\partial_{r}u_{m}|^2dxdm\\
&=\alpha\|\left(-\Delta\right)^{\frac{\alpha}{2}}u\|^{2}_{L^2}-
\int^{\infty}_{0}m^\alpha\int_{\mathbb{R}^{n}}\left(1-\partial^{2}_{r}\varphi_{R}\right)|\partial_{r}u_{m}|^2dxdm\\
&\leq \alpha\|\left(-\Delta\right)^{\frac{\alpha}{2}}u\|^{2}_{L^2}.
\end{aligned}
\end{equation}
Moreover, by \cite[Lemma A.2]{BHL16}, we obtain
\begin{equation}
\left|\int^{\infty}_{0}m^{\alpha}\int_{\mathbb{R}^n}(\triangle^2 \varphi_{R})|u_m|^2dxdm\right|
\leq C \|\triangle^2 \varphi_{R}\|^{\alpha}_{L^{\infty}}\|\triangle \varphi_{R}\|^{1-\alpha}_{L^{\infty}}\|u\|^2_{L^2}\leq C R^{-2\alpha}.
\end{equation}
Note that $\triangle \varphi_{R}(r)-n\equiv 0$ on $\{r\leq R\}$. Thus, we obtain that
\begin{equation}
\begin{aligned}
-\frac{2\sigma}{\sigma +1}\int_{\mathbb{R}^n} (\triangle \varphi_{R})|u|^{2\sigma+2}dx =
-\frac{2\sigma n}{\sigma +1}\int_{\mathbb{R}^n} |u|^{2\sigma +2}dx
-\frac{2\sigma}{\sigma +1}\int_{|x|\geq R}(\triangle \varphi_{R}-n)|u|^{2\sigma+2}dx,
\end{aligned}
\end{equation}
and
\begin{equation}
\begin{aligned}
-\frac{2\sigma}{\sigma +1}\int_{|x|\geq R}(\triangle \varphi_{R}-n)|u|^{2\sigma+2}dx &\leq \frac{2\sigma}{\sigma +1}
\|\triangle \varphi_{R}\|_{L^{\infty}}\int_{|x|\geq R}|u|^{2\sigma+2}dx+\frac{2n\sigma}{\sigma +1}\int_{|x|\geq R}|u|^{2\sigma+2}dx\\
&\leq C(n, \sigma)\int_{|x|\geq R}|u|^{2\sigma+2}dx.
\end{aligned}
\end{equation}
Let $0< \varepsilon < (2\alpha-1)\sigma/\alpha$. By the interpolation inequality, for any $\frac{1}{2} < s:=\frac{1}{2}+\frac{\varepsilon \alpha}{2\sigma} < \alpha <\frac{n}{2}$,  we have
\begin{equation}
\label{eq30}
\|(-\Delta)^{s/2}u\|_{L^2}\leq
\|u\|^{1-s/\alpha}_{L^2}\|(-\triangle)^{\alpha/2}u\|^{s/\alpha}_{L^2} \leq C\|(-\Delta)^{\alpha/2} u\|^{s/\alpha}_{L^2}.
\end{equation}
Next, we recall from \cite{CO09} the fractional radial Sobolev (generalized Strauss) inequality
\begin{equation*}
\sup _{x \neq 0}|x|^{\frac{n}{2}-s}|u(x)| \leqslant C\left\|(-\Delta)^{s / 2} u\right\|_{L^2}
\end{equation*}
for all radial functions $u \in \dot{H}^s\left(\mathbb{R}^n\right)$ provided that $1 / 2<s<n / 2$.
Thus by the generalized Strauss inequality and inequality \eqref{eq30}, we obtain
\begin{equation}
\label{estimate2}
\begin{aligned}
\int_{|x|\geq R}|u|^{2\sigma+2}dx &\leq \|u\|^{2}_{L^2}\|u\|^{2\sigma}_{L^{\infty}(|x|\geq R)} \leq C(n, \alpha, \varepsilon)R^{-2\sigma(\frac{n}{2}-s)}\|(-\Delta)^{s/2}u\|^{2\sigma}_{L^2}\\
& \leq C(n, \alpha, \varepsilon)R^{-2 \sigma (\frac{n}{2}-s)}\|(-\Delta)^{\frac{s}{2}}u\|^{2\sigma s/\alpha}_{L^2}\\
&=C(n, \alpha, \varepsilon)R^{-\sigma(n-1)+\varepsilon \alpha}\|(-\Delta)^{\alpha/2}u\|^{(\sigma/\alpha)+\varepsilon}_{L^2}.
\end{aligned}
\end{equation}

Combining \eqref{estimate1}-\eqref{estimate2}, and using inequality \eqref{0energy1},  we have the following estimate which yields
\begin{equation*}
\begin{aligned}
&\frac{d}{dt}\mathbb{E}\left[\mathcal{M}_{\varphi_{R}}[u(t)]\right]\\
&=\mathbb{E}\left[\int^{\infty}_0 m^{\alpha}\int_{\mathbb{R}^n}
\left\{4 \overline{\partial_k u_m}(\partial^2_{kl}\varphi_{R})\partial_l u_m-(\triangle^2 \varphi_{R})|u_m|^2\right\}dxdm\right]-\frac{2\sigma}{\sigma+1}\mathbb{E}\left[\int_{\mathbb{R}^n}(\Delta \varphi_{R})|u|^{2\sigma+2}dx\right]\\
& \leq 4\sigma n\mathbb{E}[H(u)]
-2(\sigma n-2\alpha)\mathbb{E}\left[\|(-\Delta)^{\frac{\alpha}{2}}u(t)\|^2_{L^2}\right]\\
&+ C\left\{R^{-2\alpha}+CR^{-\sigma(n-1)+\varepsilon \alpha}\mathbb{E}\left[\|(-\Delta)^{\alpha/2}u(t)\|^{(\sigma/\alpha)+\varepsilon}_{L^2}\right]\right\}\\
& \leq 4\sigma n\mathbb{E}[H(u_0)]+\sigma nC(n,\alpha)t{\mathcal{R}}\mathbb{E}[M(u_0)]\\
&-2(\sigma n-2\alpha)\mathbb{E}\left[\|(-\Delta)^{\frac{\alpha}{2}}u(t)\|^2_{L^2}\right]+ C\left\{R^{-2\alpha}+CR^{-\sigma(n-1)+\varepsilon \alpha}\mathbb{E}\left[\|(-\Delta)^{\alpha/2}u(t)\|^{(\sigma/\alpha)+\varepsilon}_{L^2}\right]\right\},
\end{aligned}
\end{equation*}
for $0< \varepsilon < (2\alpha-1)\sigma/\alpha$.
\end{proof}
In the following, we will present the estimate, which comes from \cite[Lemma A.1]{BHL16}.
\begin{lemma}\label{lem10}
Let $n\geq 1$ and suppose $\varphi: \mathbb{R}^{n}\rightarrow \mathbb{R}$ is such that
$\nabla \varphi \in W^{1, \infty}(\mathbb{R}^{n})$. Then for all $u \in H^{1/2}(\mathbb{R}^{n})$, it holds that
\begin{equation*}
\left|\int_{\mathbb{R}^n} \bar{u}(x)\nabla \varphi(x)\cdot \nabla u(x)dx\right|\leq C \left(\||\nabla|^{1/2}u\|^2_{L^2}+\|u\|_{L^2}\||\nabla|^{1/2}u\|_{L^2}\right),
\end{equation*}
where $C$ is a positive constant depending only on $\|\nabla \varphi\|_{W^{1, \infty}}$ and $n$.
\end{lemma}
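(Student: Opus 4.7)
Since the claim is the expectation of a pointwise deterministic inequality, I will work first $\omega$-wise and integrate at the end. The strategy is to use the $\dot H^{1/2}$--$\dot H^{-1/2}$ duality pairing:
\begin{equation*}
\left|\int_{\mathbb{R}^n}\bar u\,\nabla\varphi\cdot\nabla u\,dx\right|\leq \|\bar u\,\nabla\varphi\|_{\dot H^{1/2}}\,\|\nabla u\|_{\dot H^{-1/2}}.
\end{equation*}
The second factor is immediate from Plancherel: since $\nabla$ has Fourier symbol $i\xi$ and $|\nabla|^{-1/2}$ has symbol $|\xi|^{-1/2}$, one obtains $\|\nabla u\|_{\dot H^{-1/2}}=\||\nabla|^{1/2}u\|_{L^2}$, which supplies one copy of $\||\nabla|^{1/2}u\|_{L^2}$ on the right-hand side of the claim.

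The main work is the multiplier estimate
\begin{equation*}
\|\bar u\,\nabla\varphi\|_{\dot H^{1/2}}\leq C\|\nabla\varphi\|_{W^{1,\infty}}\bigl(\||\nabla|^{1/2}u\|_{L^2}+\|u\|_{L^2}\bigr),
\end{equation*}
which I plan to prove componentwise from the Gagliardo representation $\|h\|_{\dot H^{1/2}}^2 = C_n\iint|h(x)-h(y)|^2/|x-y|^{n+1}\,dx\,dy$, applied with $h = (\partial_j\varphi)\bar u$ and the telescoping split $h(x)-h(y) = (\partial_j\varphi)(x)(\bar u(x)-\bar u(y)) + \bar u(y)((\partial_j\varphi)(x)-(\partial_j\varphi)(y))$. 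The first summand contributes $\|\nabla\varphi\|_{L^\infty}^2\||\nabla|^{1/2}u\|_{L^2}^2$ directly. For the second, I will cut the inner $dx$-integral at $|x-y|=1$: on $|x-y|\leq 1$ use $|(\partial_j\varphi)(x)-(\partial_j\varphi)(y)|\leq \|\nabla^2\varphi\|_{L^\infty}|x-y|$, producing the convergent radial integral $\int_0^1 r^{1-n}\cdot r^{n-1}\,dr = 1$; on $|x-y|>1$ use $|(\partial_j\varphi)(x)-(\partial_j\varphi)(y)|\leq 2\|\nabla\varphi\|_{L^\infty}$ against $\int_1^\infty r^{-2}\,dr = 1$. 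Summing yields a $y$-kernel bounded by $C\|\nabla\varphi\|_{W^{1,\infty}}^2$, which integrated against $|\bar u(y)|^2$ produces $C\|\nabla\varphi\|_{W^{1,\infty}}^2\|u\|_{L^2}^2$.

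Inserting both bounds into the duality inequality and applying $\mathbb{E}$ gives the stated inequality, with $C$ depending only on $n$ and $\|\nabla\varphi\|_{W^{1,\infty}}$. The principal technical obstacle is the second piece of the multiplier estimate: a naive nonhomogeneous product rule $\|fg\|_{H^{1/2}}\lesssim \|f\|_{W^{1,\infty}}\|g\|_{H^{1/2}}$ only yields the coarser $\|u\|_{L^2}^2+\||\nabla|^{1/2}u\|_{L^2}^2$, whereas the sharper mixed form $\||\nabla|^{1/2}u\|_{L^2}^2+\|u\|_{L^2}\||\nabla|^{1/2}u\|_{L^2}$ demanded here hinges on working in the homogeneous spaces $\dot H^{\pm1/2}$ and isolating, through the near/far split, the $\|u\|_{L^2}$ contribution coming from the Lipschitz regularity of $\nabla\varphi$ from the $\||\nabla|^{1/2}u\|_{L^2}$ contribution coming from its boundedness.
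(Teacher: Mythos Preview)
Your argument is correct. The paper itself does not prove this lemma at all: it simply quotes the estimate from Boulenger--Himmelsbach--Lenzmann \cite[Lemma~A.1]{Boulenger16} and moves on. So there is no ``paper's own proof'' to compare against; what you have written is in fact a self-contained proof of the cited result.

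For what it is worth, your route---$\dot H^{1/2}$--$\dot H^{-1/2}$ duality to peel off one factor $\||\nabla|^{1/2}u\|_{L^2}$, followed by the Gagliardo-seminorm product estimate for $\|\bar u\,\nabla\varphi\|_{\dot H^{1/2}}$ with the near/far split at scale~$1$---is essentially the argument in the cited reference as well. The duality step and the identification $\|\nabla u\|_{\dot H^{-1/2}}=\||\nabla|^{1/2}u\|_{L^2}$ are exactly right, and the telescoping decomposition together with the Lipschitz bound on $|x-y|\le 1$ and the $L^\infty$ bound on $|x-y|>1$ cleanly produces the $\|u\|_{L^2}$ contribution. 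Your remark that working in the \emph{homogeneous} $\dot H^{\pm 1/2}$ spaces is what delivers the mixed term $\|u\|_{L^2}\||\nabla|^{1/2}u\|_{L^2}$ rather than the cruder $\|u\|_{L^2}^2$ is also on point, and matches how the lemma is used downstream in the blow-up argument (where mass conservation turns that mixed term into a constant multiple of $\||\nabla|^{1/2}u\|_{L^2}$).
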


Now we will present the lower bound of $u(t)$.
\begin{lemma}\label{lem11}
Assume that $\mathbb{E}\left[H(u(t))\right]< 0$, then for all $t\in [0,\tau^*)$, there exists a positive constant $C$ such that
\begin{equation}\label{bb1}
\mathbb{E}\left[\|(-\Delta)^{\alpha/2}u(t)\|_{{L^2}}\right]\geq C.
\end{equation}
\end{lemma}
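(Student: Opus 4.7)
The plan is to combine the a.s.\ mass conservation from Proposition~\ref{prot} with the sharp fractional Gagliardo--Nirenberg (GN) inequality to convert the hypothesis $\mathbb{E}[H(u(t))]<0$ into a scalar inequality for the random variable $X(t):=\|(-\Delta)^{\alpha/2}u(t)\|_{L^{2}}$, and then to extract a positive lower bound on $\mathbb{E}[X(t)]$ using the a priori $H^{2\alpha}$-regularity of $u$ built into the standing hypothesis.

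Set $k:=n\sigma/\alpha\ge 2$ (forced by $\sigma\ge 2\alpha/n$); the further restriction $\sigma\le 2\alpha/(n-2\alpha)$ places $2\sigma+2$ in the admissible range of the sharp GN inequality
$$
\|u\|_{L^{2\sigma+2}}^{2\sigma+2}\le C_{GN}\|u\|_{L^{2}}^{2\sigma+2-k}X^{k}.
$$
Combined with $\|u(t)\|_{L^{2}}=\|u_{0}\|_{L^{2}}$ a.s., this gives the pathwise bound
$$
H(u(t))\ge\phi(X(t)):=\tfrac{1}{2}X(t)^{2}-C_{0}X(t)^{k},\qquad C_{0}:=\tfrac{C_{GN}\|u_{0}\|_{L^{2}}^{2\sigma+2-k}}{2\sigma+2}.
$$
Taking expectations turns the hypothesis into $\mathbb{E}[\phi(X(t))]<0$. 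In the supercritical range $k>2$, $\phi\ge 0$ on $[0,y^{\ast}]$ and $\phi<0$ on $(y^{\ast},\infty)$, where $y^{\ast}:=(2C_{0})^{-1/(k-2)}>0$; hence the event $A:=\{X(t)>y^{\ast}\}$ must have positive probability and $\mathbb{E}[X(t)]\ge y^{\ast}\mathbb{P}(A)>0$. A quantitative bound on $\mathbb{P}(A)$ follows from the splitting
$$
|\mathbb{E}[H(u(t))]|\le \mathbb{E}\bigl[(C_{0}X^{k}-\tfrac{1}{2}X^{2})\mathbf{1}_{A}\bigr]\le C_{0}M^{k}\mathbb{P}(A),
$$
where $M$ is the a.s.\ upper bound on $X(t)$ inherited from $u\in C([0,T);H^{2\alpha}(\mathbb{R}^{n}))$, so $\mathbb{P}(A)\ge|\mathbb{E}[H(u(t))]|/(C_{0}M^{k})$, yielding the desired positive lower bound. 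In the mass-critical case $k=2$, a direct computation from $\mathbb{E}[H(u(t))]<0$ combined with GN gives $\mathbb{E}[X^{2}]\ge(2\sigma+2)|\mathbb{E}[H(u(t))]|/(C_{GN}\|u_{0}\|_{L^{2}}^{2\sigma})$, and the inequality $\mathbb{E}[X^{2}]\le M\mathbb{E}[X]$ (again from the a.s.\ bound) closes the argument.

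The main obstacle is the descent from $\mathbb{E}[X(t)^{2}]$ to $\mathbb{E}[X(t)]$: after taking expectations, the GN-based inequality naturally controls only the second and $k$-th moments of $X(t)$, and lowering the exponent to $1$ goes against the direction of Jensen's inequality. Without further input, one can only conclude $\mathbb{E}[X(t)]>0$; the truly quantitative lower bound requires the a priori $L^{\infty}$-in-$\omega$ control on $X(t)$, which is precisely what the $H^{2\alpha}$-regularity assumption built into the statement of the blow-up theorem supplies.
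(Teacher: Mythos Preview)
The paper's argument is a short contradiction: assume there is a sequence $t_k$ with $\mathbb{E}\bigl[\|(-\Delta)^{\alpha/2}u(t_k)\|_{L^2}\bigr]\to 0$; by $L^2$-mass conservation and Gagliardo--Nirenberg this forces $\mathbb{E}\bigl[\|u(t_k)\|_{L^{2\sigma+2}}\bigr]\to 0$, hence $\mathbb{E}[H(u(t_k))]\to 0$, contradicting the negativity hypothesis. Your direct route through the pathwise inequality $H(u(t))\ge\phi(X(t))$ and the sign structure of $\phi$ is genuinely different and, up to the qualitative conclusion $\mathbb{E}[X(t)]>0$ via $\mathbb{P}(X(t)>y^{\ast})>0$, is sound and in fact more transparent about the moment issue than the paper's sketch.

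The gap is in your quantitative step. The hypothesis $u\in C([0,T);H^{2\alpha}(\mathbb{R}^n))$ in Theorem~2 is a pathwise regularity statement; it yields only an $\omega$-dependent bound $X(t,\omega)\le M(\omega)$, not a deterministic constant $M$. Consequently neither the estimate $|\mathbb{E}[H(u(t))]|\le C_0 M^{k}\,\mathbb{P}(A)$ nor $\mathbb{E}[X^2]\le M\,\mathbb{E}[X]$ produces a lower bound that is independent of $\omega$, and the argument does not upgrade beyond the bare positivity you already have from $\mathbb{P}(A)>0$. The claimed ``a priori $L^{\infty}$-in-$\omega$ control on $X(t)$'' is simply not supplied by the standing hypotheses. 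If a $t$-uniform constant is what you are after, the missing input is a fixed negative ceiling on $\mathbb{E}[H(u(t))]$ over the relevant time range (which in this paper comes from the energy estimate \eqref{energy1} combined with condition \eqref{Condition}), not an $\omega$-uniform bound on $X$.
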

\begin{proof}
Suppose this bound is not true. Thus for some sequence of times $t_k \in [0, \tau^*)$, we have
\begin{equation*}
\mathbb{E}\left[\|(-\Delta)^{\alpha/2}u(t_{k})\|_{{L^2}}\right]\rightarrow 0 .
\end{equation*}
By the $L^2$-mass conservation and the Gagliardo-Nirenberg inequality, we have
\begin{equation*}
\mathbb{E}\left[\|u(t_k)\|_{L^{2\sigma+2}}\right]\rightarrow 0.
\end{equation*}
However, by the definition of energy, we know $\mathbb{E}\left[H(u(t_k))\right]\rightarrow 0$. Therefore it is a contradiction to $\mathbb{E}\left[H(u(t_k))\right]< 0$. This implies the inequality \eqref{bb1} holds.
\end{proof}

Now we are in the position to finish the proof of Theorem \ref{thm2}.

\begin{proof}[\textbf{Proof of Theorem \ref{thm2}}]
Step 1. Note that $\sigma n-2\alpha\geq 0$. We deduce the inequality (with $o_{R}(1) \rightarrow 0$ as $R \rightarrow+\infty$ uniformly in $t$ ):
\begin{equation}\label{viril}
\begin{aligned}
\frac{d}{dt}\mathbb{E}\left[\mathcal{M}_{\varphi_R}[u(t)]\right] &\leq 4\sigma n  \mathbb{E}[[H(u_0)]]+\sigma nC(n,\alpha)t{\mathcal{R}}\mathbb{E}[M(u_0)]\\
&-2(\sigma n-2\alpha)\mathbb{E}\left[{\|(-\Delta)^{\alpha/2}u(t)\|}^{2}_{{L^2}}\right]
+o_{R}(1)\cdot \left(1+\mathbb{E}\left[{\|(-\Delta)^{\alpha/2}u(t)\|}^{(\sigma/\alpha)+\varepsilon}_{L^2}\right]\right)\\
& \leq 4\sigma n \mathbb{E}[H(u_0)]+\sigma nC_{n,\alpha}t{\mathcal{R}}\mathbb{E}[M(u_0)]\\
&-(\sigma n-2\alpha)\mathbb{E}\left[\|(-\Delta)^{\alpha/2}u(t)\|^2_{L^2}\right],
\end{aligned}
\end{equation}
provided that $R \gg 1$ is taken sufficiently large. In the last step, we used Young's inequality, and that $\sigma / \alpha+\varepsilon<2$ when $\varepsilon>0$ is sufficiently small. So, the condition $\sigma<2\alpha$ is needed.

Step 2. Suppose $u(t)$ exists for all times $t\geq 0$, i.e., we can take $T=\infty$. Form \eqref{Condition} and \eqref{viril}, we get
\begin{equation}
\label{es2}
\frac{d}{dt}\mathbb{E}\left[\mathcal{M}_{\varphi_R}[u(t)]\right] \leq -c
\end{equation}
with some constant $c>0$.
If we integrate  \eqref{es2} on $[t_1, t]$, we obtain
\begin{equation}
\label{M}
\begin{aligned}
\mathbb{E}\left[\mathcal{M}_{\varphi_R}[u(t)]\right]\leq -(\sigma n-2\alpha)\int^{t}_{t_1}\mathbb{E}\left[\|(-\Delta)^{\alpha/2}u(\tau)\|^2_{L^2}\right]d \tau
 \leq 0.
\end{aligned}
\end{equation}
As $\alpha>1/2$, we acquire the following interpolation inequality
 \begin{equation*}
\||\nabla|^{1/2}u\|_{L^2}\leq \|(-\Delta)^{\alpha/2}u\|^{1/2\alpha}_{L^2}\|u\|^{1-1/2\alpha}_{L^2},
\end{equation*}
On the other hand, using Lemma \ref{lem10} and Lemma \ref{lem11}, we get
\begin{equation*}
\begin{aligned}
\mathbb{E}\left[\left|\mathcal{M}_{\varphi_{R}}[u(t)]\right|\right]&\leq C(M(u_0), \varphi_{R}) \mathbb{E}\left(\||\nabla|^{1/2}u\|^2_{L^2}+\||\nabla|^{1/2}u\|_{L^2}\right)\\
&\leq  C(M(u_0), \varphi_{R})\mathbb{E}\left[\left(\|(-\Delta)^{\alpha/2}u(t)\|^{1/\alpha}_{L^2}
+\|(-\Delta)^{\alpha/2}u(t)\|^{1/2\alpha}_{L^2}\right)\right]\\
&\leq C(M(u_0), \varphi_{R})\mathbb{E}\left[\|(-\Delta)^{\alpha/2}u(t)\|^{1/\alpha}_{L^2}\right].
\end{aligned}
\end{equation*}
Therefore for all $t\geq t_1$, we conclude from \eqref{M} that
\begin{equation}\label{inequality}
\mathbb{E}\left[\mathcal{M}_{\varphi_{R}}[u(t)]\right]\leq -A\int^{t}_{t_1}\mathbb{E}\left[\left|\mathcal{M}_{R}[u(\tau)]\right|^{2\alpha}\right]d\tau,
\end{equation}
where $A:=C(M(u_0), \varphi_R)>0$.

Step 3: Define $z(t)=\int^{t}_{t_1}\mathbb{E}\left[\left|\mathcal{M}_{\varphi_{R}}[u(\tau)]\right|^{2\alpha}\right]d\tau$. Clearly, the function $z(t)$ is strictly increasing and nonnegative. Moreover,  by Jensen's inequality and \eqref{inequality}, we have
\begin{equation*}
z^{\prime}(t)=\mathbb{E}\left[\left|\mathcal{M}_{\varphi_{R}}[u(t)]\right|^{2\alpha}\right] \geq \left|\mathbb{E}[\mathcal{M}_{\varphi_{R}}[u(t)]] \right|^{2\alpha} \geq A^{2\alpha}z(t)^{2\alpha}.
\end{equation*}
Hence, if we integrate this differential inequality on $[t_1, t]$, we obtain
\begin{equation*}
z(t)\geq \frac{z(t_1)}{[1-(2\alpha-1)A^{2\alpha}z(t_1)^{2\alpha-1}(t-t_1)]^{\frac{1}{2\alpha-1}}}.
\end{equation*}
Then, we conclude that
\begin{equation*}
\mathbb{E}\left[\mathcal{M}_{R}[u(t)]\right]\leq -Az(t)\leq \frac{-Az(t_1)}{[1-(2\alpha-1)A^{2\alpha}z(t_1)^{2\alpha-1}(t-t_1)]^{\frac{1}{2\alpha-1}}}.
\end{equation*}
Since $2\alpha>1$, this inequality implies that $\mathbb{E}\left[\mathcal{M}_{R}[u(t)]\right]\rightarrow -\infty$ as $t\uparrow t_{*}$ for some finite time $t_*=t_1+\left[(2\alpha-1)A^{2\alpha}z(t_1)^{2\alpha-1}\right]^{-1} <+\infty$. Therefore, the solution $u(t)$ cannot exist for all times $t\geq 0$.
This ends the proof.

\end{proof}

\noindent
{\bf Acknowledgments} The research of A. Zhang was partially supported by the Natural Science Foundation of Changsha City of China (Grant No. kq2402194). The research of Y. Zhang was partially supported by the Natural Science Foundation of Henan Province of China (Grant No. 232300420110).

\noindent
{\bf Author Contributions} Ao Zhang and Yanjie Zhang wrote the main manuscript text. All authors reviewed the manuscript.

\noindent
{\bf Data Availability}  No datasets were generated or analyzed during the current study.

\section*{Declarations}
\noindent
 {\bf Conflict of Interest} The authors declare that they have no conflict of interest.

\end{document}